\theoremstyle{plain}
\newtheorem{theorem}{Theorem}[section]
\newtheorem{lemma}[theorem]{Lemma}
\newtheorem{corollary}[theorem]{Corollary}
\newtheorem{proposition}[theorem]{Proposition}
\theoremstyle{definition}
\newtheorem{definition}[theorem]{Definition}
\newtheorem{remark}[theorem]{Remark}
\newtheorem{example}[theorem]{Example}
\newtheorem*{example*}{Example}
\newtheorem*{definition*}{Definition}
\newtheorem{remexas}[theorem]{Remarks and Examples}
\numberwithin{equation}{section}
\newcommand\opn[2]{%
  \newcommand{#1}{\operatorname{#2}}}
\newcommand\NN{{\mathbb N}}
\opn\charac{char}
\opn\projdim{proj\,dim}
\opn\depth{depth}
\opn\rank{rank}
\opn\rlex{rlex}
\opn\LEX{Lex}
\opn\fine{end}
\opn\lcm{lcm}
\opn\modnuo{mod}
\opn\supp{supp}
\opn\Ass{Ass}
\opn\Proj{Proj}
\opn\Spec{Spec}
\opn\Soc{Soc}
\opn\reg{reg}
\opn\Md{Md}
\opn\dBorel{dBorel}
\opn\Borel{Borel}
\opn\Shad{Shad}
\opn\Span{span}
\opn\Hs{Hilb}
\opn\ini{in}
\opn\Gin{Gin}
\opn\grade{grade}
\opn\length{length}
\opn\width{width}
\opn\greg{greg}
\opn\Ht{ht}
\opn\GL{GL}
\opn\Ker{Ker}
\opn\coker{coker}
\opn\im{Im}
\opn\Hom{Hom}
\opn\Tor{Tor}
\opn\Ext{Ext}
\opn\id{id}
\opn\Id{Id}
\opn\Homst{^*Hom}
\opn\Extst{^*Ext}
\opn\Gamst{^*\Gamma}
\opn\Hst{^*H}
\opn\KRS{KRS}
\opn\BKRS{BKRS}
\opn\ASL{ASL}
\opn\lk{lk}
\opn\st{st}
\newcommand\apfa{[a_1,\ldots,a_{2t}]}
\let\phi=\varphi
\newcommand\thh{^{th}}
\newcommand\thf{^{st}}
\newcommand\pnt{{\raise0.5mm\hbox{\large\bf.}}}
\newcommand\ov{\overline}
\newcommand\Ial{I_\alpha(X)}
\def\cocoa{\mbox{\rm
 C\kern-.13em o\kern-.07 em C\kern-.13em o\kern-.15em A}}
\opn\po{pol}
\opn\pol{^{\bf p}} %1-step polarization
\def\a{\alpha}
\begin{document}
\noindent  
%%%%%%%%%%%%%%%{\tiny 20/05/10, v1.2.2 jpaa1}\\
{\Large Gr\"obner bases of ideals cogenerated by Pfaffians}

\vspace{1cm}
\noindent
{\bf Emanuela De Negri}. {\small Universit\`a di Genova, Dipartimento di Matematica, Via Dodecaneso 35, 16146 Genova, Italy. {\it e-mail}:
denegri@dima.unige.it}\\  
\noindent
{\bf Enrico Sbarra}. {\small  Universit\`a di Pisa, Dipartimento di
         Matematica, Largo Pontecorvo 5, 56127 Pisa, Italy.
         {\it e-mail}:  sbarra@dm.unipi.it} 

\vspace{1cm}

\noindent
{\small {\bf Abstract}

\noindent
%A Pfaffian a day keeps the doctor away!\\
We characterise the class of one-cogenerated Pfaffian ideals whose natural generators form a Gr\"obner basis with respect
to any anti-diagonal  term-order. We describe their initial ideals as well as the associated simplicial complexes, which 
turn out to be shellable and thus Cohen-Macaulay. We also provide a formula for computing their multiplicity.
}

\vspace{.3cm}
\noindent
{\small {\bf Keywords:} Gr\"obner bases, pure simplicial complexes, Pfaffian ideals, KRS.\\
{\bf 2010 Mathematics Subject Classification:} 13P10 (13F55, 13C40, 13P20, 13F50).}

\vspace{.7cm}
\noindent
\section*{Introduction}
Let $X=(X_{ij})$ be a skew-symmetric $n\times n$ matrix of indeterminates. 
By \cite{DeP}, the polynomial ring $R=K[X]:=K[X_{ij}\: 1\leq i< j\leq n]$, $K$ being a field, is an algebra with straightening law (ASL for short) 
on the poset $P(X)$ of all Pfaffians of $X$ with respect to the natural  partial order defined in \cite{DeP}. Given any subset of $P(X)$, the ideal of $R$ it generates is called a  Pfaffian ideal. The special case of Pfaffian ideals $I_{2r}(X)$ generated by the subset $P_{2r}(X)$ of $P(X)$ consisting of all Pfaffians of size $2r$ has been studied extensively (\cite{A}, \cite{KL}, \cite{Ma}). These ideals belong to a wider family of Pfaffian ideals called  one-cogenerated or simply  cogenerated.
A cogenerated Pfaffian ideal of $R$ is an ideal generated by all Pfaffians of $P(X)$ of any size which are not bigger than or equal to a fixed Pfaffian $\alpha$. We  denote it by $\Ial:=(\beta \in P(X) \: \beta \not\geq \alpha)$. Clearly,  if the size of $\alpha$ is $2t$, then all Pfaffians of size bigger than $2t$ are in $\Ial$.
The ring $R_\a (X)=K[X]/I_{\a}(X)$ inherits the ASL structure from $K[X]$, by means of which one is able to prove  that $R_\a (X)$ is a Cohen-Macaulay normal domain, and characterise Gorensteiness, as performed in  \cite{D}.
In \cite{D2} a formula for the $a$-invariant of $R_\a (X)$ is also given.

Our attention will focus on the properties of cogenerated Pfaffian ideals and their Gr\"obner bases (G-bases for short) w.r.t. anti-diagonal term orders, which are natural in this setting.
By \cite[Theorem 4.14]{Ku} and, independently, by \cite[Theorem 5.1]{HT}, the set $P_{2r}(X)$ is a G-basis for the ideal $I_{2r}(X)$. In a subsequent remark the authors ask whether their result can be extended to any cogenerated Pfaffian ideal. 
%More precisely, given any Pfaffian $\alpha$,  is the natural set of 
%generators of $\Ial$ a G-base for $\Ial$ w.r.t.  an anti-diagonal term-order? 
This question is very natural, and in the analogue cases of ideals of minors of a generic matrix and of a symmetric matrix the answer is affirmative, as proved respectively in \cite{HT} and \cite{C}. Quite surprisingly the answer is negative (see Example \ref{emas}) and that  settles the starting point of our investigation.
The aim of this paper is to characterise  cogenerated Pfaffians ideals whose natural generators are a G-basis w.r.t. any anti-diagonal term order in terms of their cogenerator. We call such ideals G-Pfaffian ideals. 
In Section 1 we set some notation, recall some basic notions of standard monomial theory (cf. \cite{BV}), among which that of  standard tableau, and describe the Knuth-Robinson-Schensted correspondence (KRS for short) introduced and studied in \cite{K}, since this is the main tool used to prove results of this kind.
KRS has been first used by Sturmfels \cite{St}  to compute G-bases of determinantal ideals (see also \cite{BC}, \cite{BC2}) and it has been  applied in \cite{HT} to the study of Pfaffian ideals of fixed size.
It turns out that the original KRS is not quite right for our purposes, therefore the first part of Section 2 is devoted to the analysis of a modification that can
be applied to $d$-tableaux in a smart way. In the remaining part of the section  we state our main result, cf. Theorem \ref{main}, by characterising the class of G-Pfaffian ideals. This is performed by proving the two implications separately in  Theorem \ref{laltra} and  Proposition \ref{una} by means of what we call BKRS. In Section 3, Proposition \ref{inidescr},  we describe the initial ideals of such ideals and in Corollary \ref{inimini} their  minimal set of generators. Since these ideals are squarefree, we also study their associated simplicial complexes. By describing  faces and facets of the associated simplicial complex, Proposition \ref{facce} and Theorem \ref{facets} resp.,  we are able to prove that these complexes are pure, cf. Corollary \ref{purissimo}, and simplicial balls, whereas they are not simplicial spheres (see Corollary \ref{balla}). Furthermore, in Proposition \ref{molte} we provide a formula for computing their multiplicity. Finally, in Proposition \ref{shella}, we prove shellability, which yields that the simplicial complexes associated with G-Pfaffian ideals are Cohen-Macaulay as well. The interested reader can find other recent developments in the study of Pfaffian ideals in \cite{RSh} (cf. Remark  \ref{boh} (iv)) and \cite{JW} (see the end of the last section).
\noindent

\section{Standard monomial theory for Pfaffians and KRS}\label{KRSP}
Let $X=(X_{ij})$ be a skew-symmetric $n\times n$ matrix of indeterminates and let $R=K[X]:=K[X_{ij}\: 1\leq i< j\leq n]$ the polynomial ring over the field $K$. 
The Pfaffian $\alpha=\alpha(A)$ of a skew-symmetric sub-matrix $A$ of $X$ with row and column indexes $a_1<\ldots<a_{2t}$ is denoted by $[a_1,\ldots,a_{2t}]$. 
We say that $\alpha$ is a {\it $2t$-Pfaffian} and that the {\it size} of $\alpha$ is $2t$.
Let now $P(X)$ be the set of all Pfaffians of $X$ and let us recall the definition of partial order on $X$ as introduced in \cite{DeP}.
Let $\alpha=[a_1,\ldots,a_{2t}], \beta=[b_1,\ldots,b_{2s}]\in P(X)$. Then
$$\alpha\leq \beta \hbox{\;\;\; if and only if \;\;\;} t\geq s \hbox{ and } a_i\leq b_i \hbox{ for } i=1,\ldots,2s.$$

\begin{definition}
Let $\alpha\in P(X)$. The ideal of $R$ cogenerated by $\alpha$ is the ideal $$\Ial:=(\beta \in P(X) \: \beta \not\geq \alpha).$$
\end{definition}
We observe that  the ideal of $R$ generated by $P_{2r}(X)$, the set of all Pfaffians of size $2r$, is nothing but $\Ial$, where $\alpha=[1,\ldots,2r-2]$.  We recall that a {\it standard monomial} of $R$ is a product $\alpha_1\cdot\ldots\cdot \alpha_h$ of Pfaffians with $\alpha_1\leq\ldots\leq \alpha_h$.
Since $R$ is an ASL  on $P(X)$, standard monomials form a basis of $R$ as a $K$-vector space and,  since $\Ial$ is an order ideal, the ring $R_\a (X)$ inherits the ASL structure by that of $R$:

\begin{proposition}
\label{k-bases}
The standard monomials  $\alpha_1\cdot\ldots\cdot \alpha_h$ with $\alpha_1\leq\ldots\leq \alpha_h$ and $\alpha_1\not\geq\alpha$ form a $K$-basis of $\Ial$.
\end{proposition}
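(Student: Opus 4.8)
The plan is to derive the statement from the general theory of algebras with straightening law, exploiting the fact that the set of generators $\Sigma:=\{\beta\in P(X)\:\beta\not\geq\alpha\}$ of $\Ial$ is a poset ideal of $P(X)$. First I would verify this: if $\beta\in\Sigma$ and $\gamma\in P(X)$ satisfies $\gamma\leq\beta$, then $\gamma\in\Sigma$, for $\gamma\geq\alpha$ would give $\beta\geq\gamma\geq\alpha$, contradicting $\beta\in\Sigma$. Consequently a standard monomial $\alpha_1\cdots\alpha_h$ with $\alpha_1\leq\cdots\leq\alpha_h$ has one of its factors in $\Sigma$ precisely when its smallest factor $\alpha_1$ does, i.e. precisely when $\alpha_1\not\geq\alpha$; this is the source of the shape of the asserted $K$-basis.

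Two of the four things to check are then immediate. The monomials in the statement are $K$-linearly independent because, by the ASL axioms (\cite{BV}), all standard monomials form a $K$-basis of $R$, and ours are a subset of that basis. And each such monomial lies in $\Ial$, since $\alpha_1\not\geq\alpha$ means $\alpha_1\in\Sigma\subseteq\Ial$ and hence $\alpha_1\cdots\alpha_h\in\Ial$.

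The remaining, and only substantial, point is that these monomials span $\Ial$ over $K$. Given $f\in\Ial$ I would write $f=\sum_{\beta\in\Sigma}g_\beta\beta$ with $g_\beta\in R$, expand each $g_\beta$ into standard monomials, and so reduce to analysing a single product $\beta\cdot\mu$ with $\beta\in\Sigma$ and $\mu$ standard. Straightening $\beta\mu$ via the relations of the ASL structure on $P(X)$ rewrites it as a $K$-linear combination of standard monomials, and the crucial observation is that each standard monomial $\gamma_1\cdots\gamma_k$ occurring there has a factor $\leq\beta$: the straightening relations replace an incomparable pair by terms each containing a factor bounded above by both members of the pair, and an easy induction on the straightening procedure shows that a factor $\leq\beta$ persists throughout. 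Since $\Sigma$ is a poset ideal, that factor, hence also the least factor $\gamma_1$, belongs to $\Sigma$, i.e. $\gamma_1\not\geq\alpha$. Thus $f$ lies in the $K$-span of the claimed monomials, which completes the argument. Alternatively, one may bypass this step by quoting the general ASL fact that, for an ASL $A$ on a poset $\Pi$ and a poset ideal $\Omega\subseteq\Pi$, the ideal $A\Omega$ has a $K$-basis given by the standard monomials having a factor in $\Omega$ and $A/A\Omega$ is the ASL on $\Pi\setminus\Omega$ (cf. \cite{BV}); this applies verbatim with $A=R$, $\Pi=P(X)$, $\Omega=\Sigma$, and the main work has simply been moved into that cited result.
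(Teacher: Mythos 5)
Your proposal is correct and follows the same route as the paper: the paper gives no separate argument for this proposition, but presents it as an immediate consequence of the ASL structure of $R$ on $P(X)$ together with the observation that the cogenerators form an order (poset) ideal, which is exactly the general ASL fact you verify and/or cite from \cite{BV}. Your spelled-out straightening argument and the poset-ideal check are the standard justification of that fact, so there is no genuine difference in approach.
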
 

\noindent
A natural way to represent monomials, i.e. products of Pfaffians,  is by the use of tableaux. Given $\alpha_i=[a_{1i},a_{2i},\ldots,a_{t_{i}i}]$ for $i=1,\dots,h$, one identifies a monomial  $\alpha_1\cdot\ldots\cdot \alpha_h$ with the tableau $T=|\alpha_1|\alpha_2|\ldots|\alpha_h|$, whose $i$-th column is filled with the indexes of the Pfaffian $\alpha_i$. 
Clearly, such a tableau has two properties: the size of all of its columns is even, i.e. $T$ is a {\it $d$-tableau}, and each column is a strictly increasing sequence of integers. We recall also that
the {\it shape} of $T$ is the vector $(\lambda_1,\ldots,\lambda_t)$ where $\lambda_j$ is the number of entries in the $j\thh$ row of $T$; the {\it length} of $T$ is simply the number of entries of the first column and it is denoted by $\length(T)$. Finally, $T$ is said to be {\it standard} if the elements in every row form a weakly increasing sequence. For instance, the standard monomial $\alpha_1\alpha_2\alpha_3=[1,2,3,4][2,3,4,5][2,6]$ is encoded into the standard tableau $T=|\alpha_1|\alpha_2|\alpha_3|$, as shown in Figure \ref{tab}.

\begin{wrapfigure}{r}{0.5\textwidth}
\begin{center}
\includegraphics[width=0.15\textwidth]{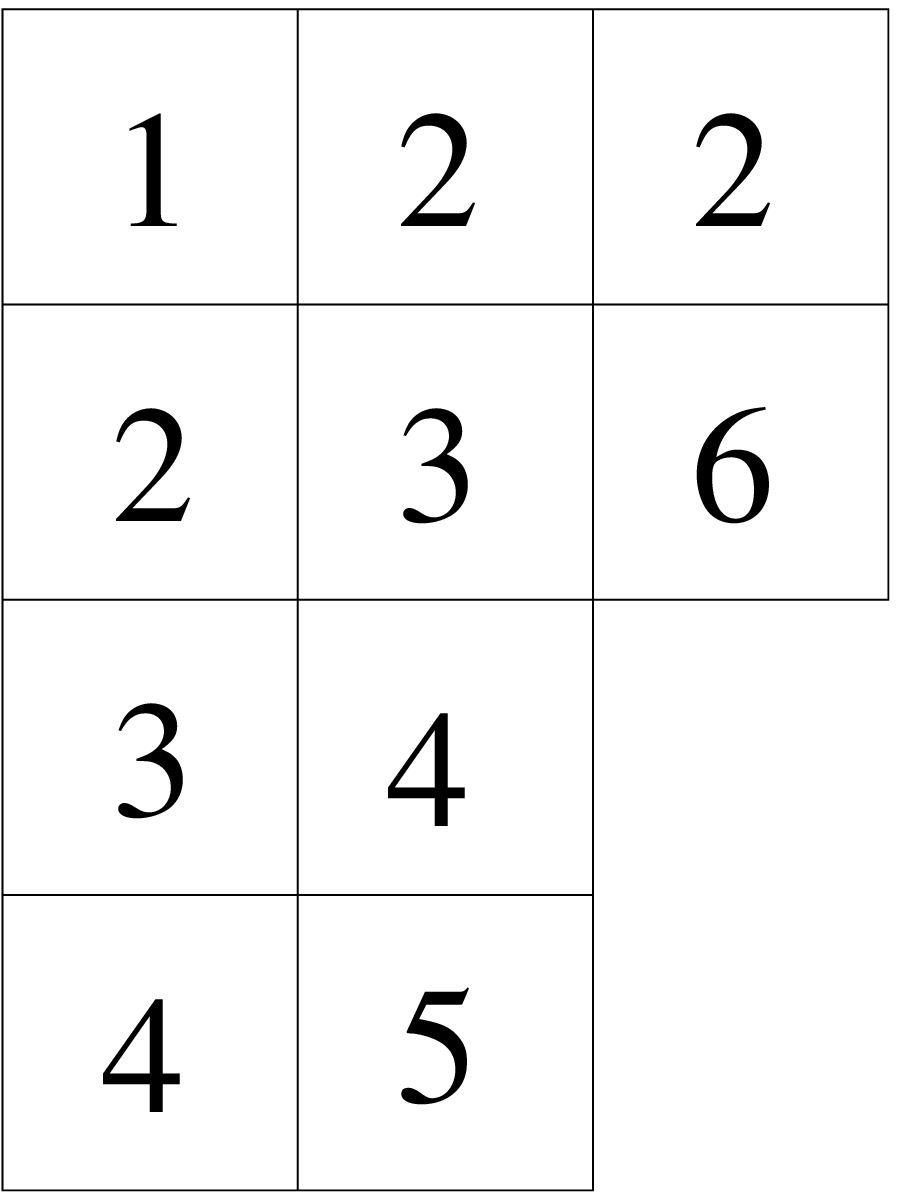}
\end{center}
\caption{$T=|\alpha_1|\alpha_2|\alpha_3|$.}
\label{tab}
\end{wrapfigure}
\noindent

\noindent
Thus, the tableau $T$  is a standard $d$-tableau with  shape $(3,3,2,2)$ and $\length(T)=4$. Obviously, if $T$ is a $d$-tableau of shape $(\lambda_1,\ldots, \lambda_t)$, $t$ is even and $\lambda_1=\lambda_2,\ldots,\lambda_{t-1}=\lambda_t$.
Also observe that a monomial  is standard if and only if the corresponding tableau is standard.

A very effective tool in studying G-bases of order ideals is the KRS correspondence.
For the reader's sake, we recall now the original KRS (cf. \cite{K}) as it is used in \cite{HT} to prove that the $2r$-Pfaffians are a G-bases of the ideal they generate. For more information on KRS the reader is also referred to \cite{F}. KRS is a bijection between the set of pairs of standard tableaux, which correspond naturally to standard monomials in the case of minors of a generic matrix, and ordinary monomials. Let  $(T_1,T_2)$ be an ordered pair  of standard tableaux of the same shape (a {\it standard bi-tableau} for short)  with $k$ elements each. One first associates with $(T_1,T_2)$ a two-lined  array $\begin{pmatrix} u_1 & u_2 & \ldots & u_k \\ v_1 & v_2 & \ldots & v_k \end{pmatrix}$ which satisfies the conditions $(\bullet)$:\;\;$u_1\geq\ldots\geq u_k$ and  $v_i\leq v_{i+1}$ if $u_i=u_{i+1}$. Such an array  can in turn  be identified with the monomial 
$f=\Pi_{i=1}^kX_{v_iu_i}$ of $R$.
The correspondence between tableaux and arrays relies on the {\bf delete} procedure we describe below.\\
{\bf delete}: It applies to a standard tableau $T$ and an element $u$, which is a corner of $T$, in the following way. Remove $u$  and set it in place of the first (strictly) smaller element of the above row going from right to left. Use the newly removed element in the same way, until an element $v$ is taken away from the first row of $T$. The result is a pair $(u,v)$ and a tableau $T'$ with exactly one element less than $T$.\\
Now, we describe the KRS.\\
{\bf $\KRS$}: Let $(T_1,T_2)$ be a bi-tableau. Take the largest element $u_1$ of $T_1$ with largest column index and remove it from $T_1$, obtaining a smaller tableau $T_1'$. Apply {\bf delete} to $T_2$ and the element of $T_2$ which is in the same position of $u_1$ in $T_1$, obtaining an element $v_1$ and a smaller tableau $T_2'$. Notice that this can be done because $u_1$ is placed in a corner of $T_1$ and that $T_1$ and $T_2$ have the same shape. The first column of the resulting  array is thus given by $u_1$ and $v_1$. Proceed in this way, starting
again with the bi-tableau $(T_1',T_2')$, until all of the elements are removed and the full sequence is achieved. By \cite{K}, the latter fulfils the desired conditions ($\bullet$).

\begin{example}\label{krsmadeeasy}
Let $T=|\alpha_1|\alpha_2|\alpha_3|$, with $\alpha_1=[1,3,4,5]$, $\alpha_2=[2,3]$ and $\alpha_3=[2,5]$. We apply the above procedure to the bi-tableau $(T,T)$:
\begin{figure}[here]
\begin{center}
\includegraphics[scale=0.39]{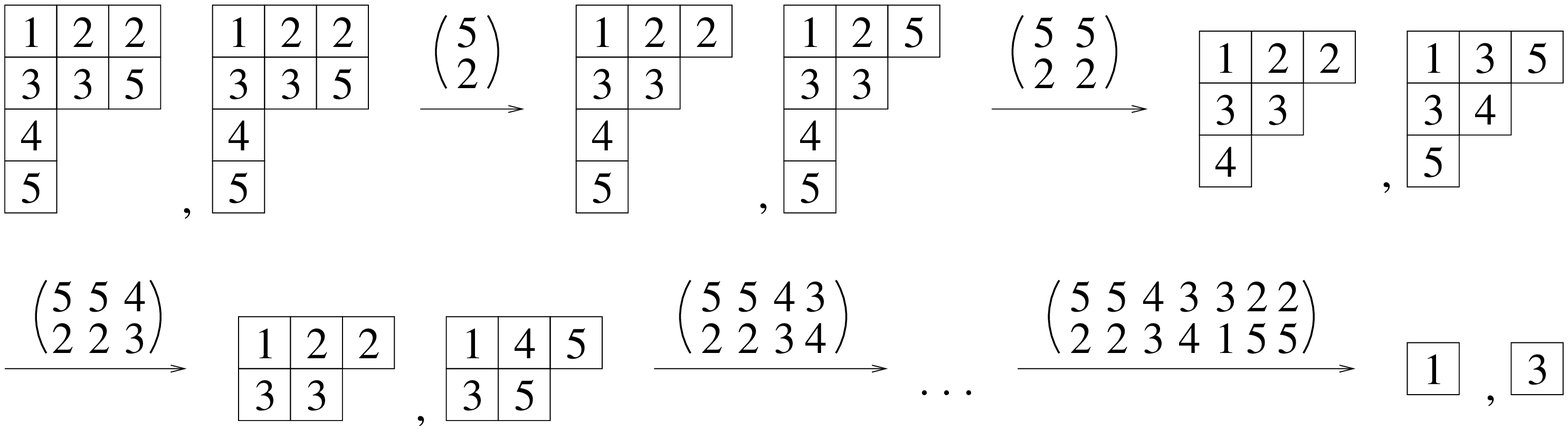}
\end{center}
\label{madeeasy}
\end{figure}

\noindent 
and, thus, $\KRS(T,T)=X_{25}X_{25}X_{34}X_{43}X_{13}X_{52}X_{52}X_{31}=X_{25}^4X_{34}^2X_{13}^2$. 
\end{example}

\noindent
In \cite{HT} such a correspondence is used in the  case of Pfaffians in the following manner. Given a standard monomial, one considers  its corresponding tableau $T$ and applies KRS to the standard bi-tableau of type $(T,T)$ obtaining the monomial $f=\KRS(T,T)$. We recall now the following definition.
\begin{definition}
The {\it width} of a monomial $f=\Pi_{i=1}^{k}X_{v_iu_i}$, with $u_1\ge u_2\ge\dots\ge u_k$,  is  the length of the longest increasing subsequence of $v_1,\ldots,v_k$ and it is denoted by $\width(f)$. 
\end{definition} 
\noindent
For instance, the longest increasing sequence in the previous example is $2,3,4,5$, therefore the width of the monomial is 4. By applying \cite[Theorem 3]{K} one has that $f=g^2$, where the essential data is contained in $g$, and the square appears because $T$ is used ``twice''. This is not an inconvenience in studying ideals generated by Pfaffians of a fixed size, since  the crucial point in the argument is the equality $\length(T)=\width(f)=2\width(g)$.\\
In our case the same holds (cf. Lemma \ref{width2}) but it is not sufficient to gather the information we need. Therefore we shall use a modified version of KRS that produces directly $g$ as an output and carries information on the 
indeterminates of $g$ as well. This is taken care of in the next section.

%and because of the following crucial fact (cf. Lemma \cite[Section 4, Corollary of Lemma 1]{K}).

%\begin{lemma}\label{width}
%Let $f$ be the image of a standard bi-tableau $(T_1,T_2)$ by means of $\KRS$. Then $\width(f)=\length(P)$.
%\end{lemma}

\section{A characterisation of G-Pfaffian ideals}\label{theorem}
Throughout this section and in the rest of the paper we shall consider  {\it anti-diagonal} term orders on $R$. We recall that a term order is said to be anti-diagonal  if
the initial monomial of the Pfaffian $[a_1,...,a_{2t}]$ is its main anti-diagonal ({\it adiag} for short), i.e.  
$$\ini([a_1,...,a_{2t}])=X_{a_1a_{2t}}X_{a_2a_{2t-1}}\cdot\ldots\cdot X_{a_ta_{t+1}}.$$ 
The aim of this section is to characterise what we call {\it G-Pfaffian} ideals, i.e. one-cogenerated ideals of Pfaffians whose natural generators are a G-bases w.r.t. such term orders. This is not always the case as it is shown in the following example.

\begin{example}\label{emas}
Let $X$ be a $6\times 6$ skew symmetric matrix of indeterminates,  and let $\Ial$ be the Pfaffian ideal cogenerated  by $\alpha=[1,2,4,5]$. The natural generators of $\Ial$ are 
$[1,2,3,4,5,6]$, $[1,2,3,4]$, $[1,2,3,5]$, $[1,2,3,6]$ whose leading terms are $[1,6][2,5][3,4]$, $[1,4][2,3]$, $[1,5][2,3]$, $[1,6][2,3]$ respectively. Therefore, the element $[1,2,3,4][1,5]-[1,2,3,5][1,4]$ belongs to $\Ial$ but its initial term, which is $[1,5][2,4][1,3]$, is not divisible by any of the leading terms of the generators. 
\end{example}

The main result of this  section is stated in  the following theorem.

\begin{theorem}\label{main}
The natural generators of $\Ial$ form a $G$-basis of $\Ial$ w.r.t. any anti-diagonal term order  if and only if $\alpha=\apfa$, with $a_i=a_{i-1}+1$ for $i=3,\ldots,2t-1$.
\end{theorem}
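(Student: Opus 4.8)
The plan is to prove the two implications of Theorem \ref{main} separately, which is in fact how the paper proceeds (via Theorem \ref{laltra} and Proposition \ref{una}). For the ``only if'' direction, I would argue by contrapositive: suppose $\alpha = [a_1,\ldots,a_{2t}]$ fails the condition, i.e. there is some index $i$ with $3 \le i \le 2t-1$ and $a_i > a_{i-1}+1$. The idea is to mimic the obstruction of Example \ref{emas} in general. One constructs two Pfaffians $\beta, \gamma \in P(X)$ that are natural generators of $\Ial$ (i.e. $\beta, \gamma \not\ge \alpha$) whose anti-diagonal leading terms share a common factor, so that a suitable ``straightening-type'' binomial $c_1\beta\delta - c_2\gamma\delta'$ (with $\delta,\delta'$ chosen multiples, as in the example where $\delta=[1,5]$, $\delta'=[1,4]$) lies in $\Ial$ but has an initial monomial divisible by none of the $\ini$ of the natural generators. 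The value $a_{i-1}+1$ is exactly the ``gap'' one exploits: using the index $a_{i-1}+1$ one builds a Pfaffian that is still $\not\ge\alpha$ in the $i$-th slot yet produces an anti-diagonal term escaping the initial ideal. I would verify the non-divisibility by a direct combinatorial comparison of anti-diagonals, since the leading terms of the natural generators of a cogenerated ideal are explicitly the adiags of Pfaffians $\beta\not\ge\alpha$.

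For the ``if'' direction — the substantive half — I would assume $\alpha = \apfa$ with $a_i = a_{i-1}+1$ for $i=3,\ldots,2t-1$ and show the natural generators form a G-basis. The standard strategy (going back to Sturmfels and used in \cite{HT}) is to show that $\ini(\Ial)$ is generated by the anti-diagonals of the natural generators, equivalently that the $K$-vector space dimensions match in each degree: the standard monomials $\alpha_1\cdots\alpha_h$ with $\alpha_1\not\ge\alpha$ (a $K$-basis of $\Ial$ by Proposition \ref{k-bases}) must biject, under a KRS-type map, with the monomials of $R$ that are divisible by some adiag of a natural generator. This is where the ``modified KRS'' / BKRS announced at the end of Section 1 enters: one needs a correspondence that sends a standard $d$-tableau $T$ directly to the ``square-root'' monomial $g$ with $\width(g) = \length(T)/2$ and that keeps track of which indeterminates occur, so that the condition $\alpha_1 \not\ge \alpha$ on the smallest column of $T$ translates into a divisibility condition on $\ini(\beta)$ for some natural generator $\beta$. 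The key numerical input is $\length(T) = \width(f) = 2\width(g)$ (Lemma \ref{width2}), together with the combinatorial fact that the consecutivity condition $a_i = a_{i-1}+1$ on the inner entries of $\alpha$ makes the relevant set of ``forbidden'' initial monomials close up correctly under KRS.

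Concretely, I would (i) fix an anti-diagonal term order and recall that $\ini(\beta)$ for a $2s$-Pfaffian $\beta$ is its main adiag; (ii) observe that the ideal generated by these adiags is contained in $\ini(\Ial)$ trivially, so it suffices to prove the reverse inclusion, or equivalently the Hilbert-function equality; (iii) set up BKRS on $d$-tableaux and show it restricts to a bijection between standard $d$-tableaux $T$ with smallest column $\not\ge\alpha$ and monomials $g$ with some adiag-of-a-natural-generator dividing $g$ — using that, under the consecutivity hypothesis, ``$\alpha_1 \not\ge \alpha$'' is detected purely by the first $2t-2$ (and the structure of the) entries along an anti-diagonal; (iv) conclude by comparing dimensions degree by degree. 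The main obstacle, I expect, is step (iii): proving that the modified correspondence is well-defined on $d$-tableaux (respecting the even-column-size constraint and the pairing forced by skew-symmetry) and that the consecutivity condition on $\alpha$ is precisely what is needed for the image set to match the monomials in the candidate initial ideal — this is the content that Example \ref{emas} shows genuinely fails without the hypothesis, so the proof must use $a_i = a_{i-1}+1$ in an essential, non-cosmetic way. Everything else (the containment of initial ideals, the reduction to Hilbert functions, the final counting) is comparatively routine once the bijection is in hand.
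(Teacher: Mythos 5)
Your overall architecture coincides with the paper's (two separate implications, the Sturmfels-type reduction of Lemma \ref{Sturm}, BKRS and the width formula), but as written the proposal defers exactly the steps that carry the mathematical content, so there are genuine gaps. In the ``if'' direction you never say how the hypothesis $a_i=a_{i-1}+1$ actually enters. In the paper's proof of Theorem \ref{laltra} its role is concrete: because the inner entries of $\alpha$ are consecutive, a first column $\beta=[b_1,\ldots,b_{2s}]\not\geq\alpha$ forces one of exactly four conditions --- $b_1<a_1$, or $b_2<a_2$, or ($s=t$ and $b_{2t}<a_{2t}$), or $s>t$ --- since $b_i<a_i$ at a middle index already implies $b_2<a_2$. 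Each case then needs its own BKRS mechanism for producing the dividing adiag: the case $b_1<a_1$ uses that the smallest entry of $T$ gets paired with a larger one; the case $b_2<a_2$ uses Remark \ref{1colo} on how first-column entries move, yielding a $4$-Pfaffian $[c,b_2,d,e]\in\Ial$; the case $b_{2t}<a_{2t}$ applies Lemma \ref{width2} to the intermediate tableaux at the step where $b_{2t}$ is removed, extracting a full $2t$-adiag ending in $b_{2t}$; the case $s>t$ uses Lemma \ref{width2} directly. Your step (iii) merely asserts that BKRS ``restricts to a bijection'' between the relevant tableaux and the monomials in the candidate initial ideal; that bijection is what Lemma \ref{Sturm} delivers as a conclusion once the divisibility statement is proved tableau by tableau, and no mechanism for producing the dividing generator is offered, which is the whole difficulty.

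In the ``only if'' direction your sketch points in the right direction (it is how Proposition \ref{una} proceeds), but ``construct two Pfaffians whose leading terms share a common factor and check non-divisibility'' is not yet an argument. The paper must (a) exhibit the explicit element $\beta_1\gamma_1-\beta_2\gamma_2$, whose construction differs according to the parity of the position of the first gap; (b) use the expansion formula \eqref{espanza} to show that the leading candidates cancel, so that the initial monomial is the asserted product of an adiag-type monomial with repeated row index $a_1$ (resp.\ $a_2$); and (c) classify, via the associated two-line array, all Pfaffians whose adiag divides that monomial and verify that each is $\geq\alpha$ --- this last step is where ``not divisible by the initial term of any natural generator'' is actually established, and it is not a formality. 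So the road map is the paper's, but the proposal stops where the proof starts: the use of the consecutivity hypothesis, the case analysis, and the explicit counterexample verification all remain to be supplied.
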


\noindent
For the purpose of proving the theorem, we use the following result about KRS, which is valid for any KRS correspondence. This is essentially due to Sturmfels \cite{St}. From now on we identify, with some abuse of notation, standard monomials with standard tableaux.

\begin{lemma}\label{Sturm}
Let $I\subset R$ be an ideal and let $B$ be a $K$-basis of $I$ consisting of standard tableaux. Let $S$ be a subset of $I$ such that for all $T\in B$ there exists $s\in S$ such that $\ini(s)|\KRS(T)$. Then $S$ is a G-basis of $I$ and $\ini(I)=\KRS(I)$.
\end{lemma}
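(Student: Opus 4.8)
The plan is to exploit the bijectivity of the KRS correspondence together with the standard-monomial structure. Recall that KRS is a bijection between standard bi-tableaux (equivalently, in the situations at hand, between standard tableaux, via the abuse of notation identifying $T$ with $(T,T)$ or with the relevant single-tableau version) and the monomials of $R$ satisfying the conditions $(\bullet)$; in particular it sends a $K$-basis of standard tableaux of $R$ bijectively onto the full monomial basis of $R$. Since $\ini$ is also a $K$-basis-preserving operation in the sense that $\ini(I)$ has, as a $K$-basis, exactly the standard monomials of $R/I$'s complement — more precisely, $\dim_K(R/I)_j = \dim_K(R/\ini(I))_j$ for every degree $j$ — the whole argument reduces to a dimension count once we know the inclusion $\KRS(I)\subseteq\ini(I)$, or rather the inclusion generated by $S$.

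First I would unwind what the hypothesis gives us. Let $(S)$ be the ideal generated by $S$; since $S\subseteq I$ we have $(S)\subseteq I$, hence $\ini((S))\subseteq\ini(I)$. The hypothesis says: for every $T\in B$ there is an $s=s(T)\in S$ with $\ini(s)\mid\KRS(T)$. This means $\KRS(T)\in(\ini(s):s\in S)\subseteq\ini((S))$. So the set $\{\KRS(T):T\in B\}$ is contained in $\ini((S))$. Now $B$ is a $K$-basis of $I$ consisting of standard tableaux, and by the bijectivity of KRS the monomials $\KRS(T)$, as $T$ ranges over $B$, are pairwise distinct; moreover — and this is the key structural input borrowed from Sturmfels's argument — the standard tableaux not in $B$ (i.e. a $K$-basis of $R/I$ by standard monomials) are sent by KRS bijectively onto a set of monomials that is $K$-linearly independent modulo $\ini(I)$, because KRS preserves the relevant grading/degree and the number of standard monomials of degree $j$ in $R/I$ equals $\dim_K(R/\ini(I))_j$.

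The heart of the proof is then the following chain of inequalities of Hilbert functions, carried out degree by degree (say in the standard $\ZZ$-grading of $R$, which KRS respects): for each $j$,
\[
\dim_K(\ini(I))_j \;\ge\; \dim_K(\ini((S)))_j \;\ge\; \#\{T\in B:\deg T=j\}\;=\;\dim_K(I)_j\;=\;\dim_K(\ini(I))_j,
\]
where the first inequality is $(S)\subseteq I$, the second comes from the hypothesis via $\{\KRS(T):T\in B,\deg T=j\}\subseteq(\ini((S)))_j$ together with the $K$-linear independence of these monomials, the third equality is that $B$ is a $K$-basis of $I$, and the last is the standard fact that $I$ and $\ini(I)$ have the same Hilbert function. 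Hence equality holds throughout, which forces $\ini((S))=\ini(I)$; combined with $\ini(I)=\KRS(I)$ — which follows because the monomials $\KRS(T)$, $T\in B$, already span a subspace of dimension $\dim_K I$ inside $\KRS(I)$ and inside $\ini(I)$, and they are forced to agree — we conclude that $S$ is a G-basis of $I$: indeed $\ini((S))=\ini(I)$ is exactly the statement that the leading terms of $S$ generate $\ini(I)$, i.e. that $S$ is a G-basis.

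The step I expect to be the main obstacle is the second inequality, namely making rigorous the claim that the monomials $\KRS(T)$ for $T$ in a $K$-basis of standard tableaux are linearly independent modulo $\ini(I)$ in each degree — equivalently, that passing to initial terms does not create unexpected collisions among the KRS-images. This is precisely where the non-trivial content of Sturmfels's method lies: one needs that $\KRS$ is compatible with the term order in the sense that $\ini(T)$ divides $\KRS(T)$ for every standard tableau $T$ (so that a standard tableau $T\notin B$ whose image fell into $\ini(I)$ would force $\ini(T)\in\ini(I)$, contradicting that $T$ is a standard monomial of $R/I$), plus a counting argument showing the map $T\mapsto\KRS(T)$ is ``degree-preserving and injective enough''. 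I would either cite \cite{St} directly for this compatibility, or, if a self-contained argument is wanted, prove the divisibility $\ini(T)\mid\KRS(T)$ by induction on $\length(T)$ following the recursive structure of the {\bf delete} procedure, and then deduce the independence from the fact that distinct standard monomials of $R/\ini(I)$ remain a basis.
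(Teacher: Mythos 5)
Your central argument is correct and is essentially the proof the paper points to: the paper's proof of this lemma is just a reference to \cite[Lemma 2.1]{BC}, whose content is exactly your degreewise dimension count, using that the KRS correspondence is a degree-preserving bijection from standard tableaux onto the monomials of $R$, that the hypothesis places the pairwise distinct monomials $\KRS(T)$, $T\in B$, inside $\ini((S))\subseteq\ini(I)$, and that $I$ and $\ini(I)$ have the same Hilbert function; equality throughout your chain then yields both that $S$ is a G-basis and that $\KRS(I)=\ini(I)$.

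One caveat: your closing paragraph misdiagnoses an obstacle. The second inequality in your chain needs nothing beyond injectivity and degree-preservation of KRS, since distinct monomials of $R$ are automatically $K$-linearly independent; no independence ``modulo $\ini(I)$'' is used anywhere, and the remark that tableaux outside $B$ map to monomials independent modulo $\ini(I)$ is a consequence of the lemma, not an input. In particular, do not try to prove (or rely on) the divisibility $\ini(T)\mid\KRS(T)$ for all standard tableaux: for the correspondence actually used in this paper it is false. In Examples \ref{krsmadeeasy} and \ref{ehgia}, $T$ encodes the standard monomial $[1,3,4,5][2,3][2,5]$, so its initial term is $X_{15}X_{34}X_{23}X_{25}$, whereas $\BKRS(T)=X_{25}^2X_{34}X_{13}$ is not even divisible by $X_{15}$. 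Deleting that speculative paragraph leaves a complete and correct proof along the lines of the cited one.
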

\begin{proof}
See for instance that of \cite[Lemma 2.1]{BC}.
\end{proof}

\noindent
Now we describe the KRS correspondence we are going to use through the rest of the paper. This is a bijection  between standard $d$-tableaux and ordinary monomials, as introduced in \cite{Bu}.  This variant, which we denote  by  BKRS, makes a different use of the {\bf delete} procedure.\\
{\bf BKRS}: Consider the largest element of $T$ with largest column index, we say $u_1$, and its upper neighbour $u'$.  Remove $u_1$ from $T$ and  call the resulting tableau $T'$. Apply {\bf delete} to $T '$ and $u'$ to produce the element $v_1$. The output is $(u_1,v_1)$ and the tableau $T''$, and the first step is concluded. 
Evidently, one has  that $u_1>v_1$. Now we can start again with the tableau $T''$ and  proceeding in this fashion provides the sought after two-lined array.\\
Furthermore, as it has been shown in \cite[Section 2]{Bu}, the above array is ordered lexicographically and the correspondence is $1:1$. In this manner, one obtains a bijection between $d$-tableaux and two-lined arrays satisfying $(\bullet)$, which in turn can be identified with monomials of $R$.

\begin{example}\label{ehgia}
We compute $\BKRS(T)$, where $T$ is as in Example \ref{krsmadeeasy}:
\begin{figure}[here]
\begin{center}
\includegraphics[scale=0.39]{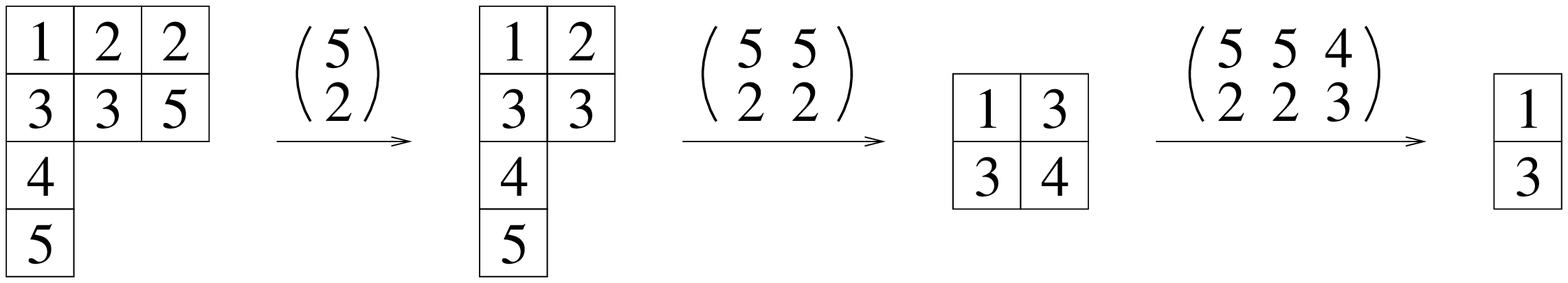}
\end{center}
\label{madeeasy2}
\end{figure}

\noindent 
and, thus, $\BKRS(T)=X_{25}^2X_{34}X_{13}$.
\end{example}
The fundamental connection between KRS and BKRS is yielded by the following proposition.

\begin{proposition}\label{KRS=B}
Let $T$ be a standard $d$-tableau. If $g=\BKRS(T)$ and $f=\KRS(T,T)$, then $f=g^2$.
\end{proposition}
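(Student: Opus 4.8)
The plan is to compare the two algorithms step by step and show that each step of $\KRS(T,T)$ is ``doubled up'' in a way that is reproduced once by $\BKRS(T)$. Concretely, recall that in $\KRS(T,T)$ one first removes from the \emph{left} copy of $T$ its largest entry $u_1$ with largest column index, and then applies \textbf{delete} to the \emph{right} copy $T$ using the entry in the position just vacated — but that position is exactly the corner of $T$ just below $u_1$, i.e.\ its occupant is $u'$, the upper neighbour of $u_1$ in $T$ (here one uses that $T$ is a $d$-tableau, so the column containing $u_1$ has even length $\geq 2$ and $u_1$ is not alone in its column). Wait — one must be careful: in $\KRS(T,T)$ the entry fed to \textbf{delete} on $T_2$ sits in the \emph{same position} as $u_1$ in $T_1$, hence it is $u_1$ itself, not $u'$. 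So the first extracted pair of $\KRS(T,T)$ is $(u_1,w)$ where $w$ is produced by $\textbf{delete}(T,u_1)$. The point of $\BKRS$ is precisely that after also removing $u_1$ from the right copy we are left in the situation where the next $\KRS$-pair is $(u_1, v_1)$ with $v_1 = \textbf{delete}(T',u')$, matching the first $\BKRS$-pair.

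So the key combinatorial claim I would isolate and prove is the following \emph{local} statement about \textbf{delete}: if $T$ is a standard $d$-tableau, $u_1$ its largest entry in the rightmost such column, $u'$ the entry directly above $u_1$, $T' = T \setminus \{u_1\}$, then applying $\textbf{delete}$ to $T$ at $u_1$ \emph{and then} $\textbf{delete}$ to the result at (the position formerly holding) $u'$ yields the same two bumped-out first-row values $w$ and $v_1$ — in some order — as: first $\textbf{delete}(T',u')$ giving $v_1$ and a tableau $T''$, then observing that $\textbf{delete}(T,u_1)$ run on the ``other copy'' in the $\KRS$ bookkeeping produces $w$ with the same multiset behaviour. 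In other words, the two bumping routes interleave so that after $\KRS(T,T)$ performs two deletions (one driven by $u_1$ in $T_1$, the next by the entry now in $u_1$'s old slot of $T_2$, namely $u_1$ again is wrong) — I would instead argue by the standard fact (going back to Knuth, and used exactly this way in \cite{Bu} and \cite{HT}) that inserting the \emph{same} tableau twice makes the output array split as $\begin{pmatrix} u_1 & u_1 & u_2 & u_2 & \cdots \\ v_1 & v_1 & v_2 & v_2 & \cdots \end{pmatrix}$, each column repeated, because at each stage the two copies $(T_i',T_i')$ remain equal, so removing the $u$-part from the first and \textbf{delete}-ing the second at the twin position produces a pair, after which the two copies are again equal and the very next pair is forced to be identical.

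With that symmetry in hand the proof is short: proceed by induction on the number of entries of $T$. In the first step of $\KRS(T,T)$ we remove $u_1$ from the left copy and \textbf{delete} the right copy at $u_1$'s old position; since the right copy equals the old left copy $T$, and after the removal the left copy is $T'$, I claim the next step of $\KRS$ pairs the same $u_1$ with $v_1 := \textbf{delete}(T',u')$ and leaves both copies equal to the tableau $T''$ that $\BKRS$ produces after its first step (one checks that $\textbf{delete}(T,\text{slot of }u_1)$ applied to a copy still equal to $T$, once $u_1$ has been popped off its twin, is literally the $\BKRS$ first step, since the entry landing in that slot after popping $u_1$ is $u'$). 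Hence the first two columns of $\KRS(T,T)$ are both $(u_1,v_1)$, contributing $X_{v_1u_1}^2$, whereas $\BKRS(T)$ contributes $X_{v_1u_1}$; and the remaining data is $\KRS(T'',T'')$ versus $\BKRS(T'')$, to which the induction hypothesis applies, giving $f = X_{v_1u_1}^2\cdot (g')^2 = g^2$ with $g = X_{v_1u_1}g'$, $g' = \BKRS(T'')$.

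The main obstacle is the bookkeeping in the paragraph above: pinning down \emph{exactly} which entry enters the vacated corner slot in the right copy after $u_1$ is popped, and checking that the $\KRS$ ``same-shape, same-tableau'' invariant is genuinely preserved after a \textbf{delete} (not just after a plain removal), so that the paired columns really do come out equal at every stage. This is a known but slightly delicate compatibility between the \textbf{delete} procedure on $d$-tableaux and the twin-copy structure; I would phrase it as a lemma (or cite \cite[Section 2]{Bu} and \cite[Theorem 3]{K} for the underlying symmetry of inserting equal tableaux) and then let the induction run. Everything else — the shape constraints forcing $u_1$ to have a genuine upper neighbour $u'$, the strict inequality $u_1 > v_1$, the termination — is routine given the setup already established in Section~1.
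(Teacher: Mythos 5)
Your argument rests on an invariant that is false: after one step of $\KRS(T,T)$ the two copies do \emph{not} remain equal. The left copy loses the corner entry $u_1$ by plain removal, while the right copy is modified by \textbf{delete}, which re-inserts $u_1$ into the row above and expels a different element from the first row; so from the second step on you are running $\KRS$ on a pair of \emph{distinct} tableaux, and neither of them is the tableau $T''$ produced by the first step of $\BKRS$. Consequently the claimed column-doubled shape of the output array, the claim that ``the first two columns of $\KRS(T,T)$ are both $(u_1,v_1)$'', and the induction ``remaining data $=\KRS(T'',T'')$ versus $\BKRS(T'')$'' all fail. A minimal counterexample is the one-column tableau $T=|[1,2]|$: the $\KRS(T,T)$ array is $\bigl(\begin{smallmatrix}2&1\\1&2\end{smallmatrix}\bigr)$, whose two columns are transposes of one another, not equal; the paper's own Example \ref{krsmadeeasy} shows the same phenomenon, where the array contains $(4,3)$ followed by $(3,4)$ and $(3,1)$ paired with $(1,3)$, not repeated columns. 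The identity $f=g^2$ is true only after the identification $X_{uv}=X_{vu}$, and its real source is not a step-by-step doubling.

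The correct mechanism, and the one the paper relies on (its proof is a citation of \cite[pag.\ 20]{Bu}), is Knuth's symmetry theorem \cite[Theorem 3]{K}: because the insertion and recording tableaux coincide, the two-line array of $\KRS(T,T)$ is symmetric, i.e.\ invariant as a multiset of columns under swapping the two rows, and since every column of a $d$-tableau has even length there are no fixed columns $(u,u)$; hence the columns split into transposed pairs $\{(u,v),(v,u)\}$, each contributing $X_{uv}^{2}$, so $f$ is a square. One must then still identify the ``half array'' (one column from each transposed pair) with the output of $\BKRS(T)$ — this is precisely Burge's contribution and is not addressed by your bookkeeping. If you want a self-contained proof rather than a citation, the statement to isolate and prove is this compatibility between Burge's deletion scheme and the symmetric half of the Knuth array, not the (false) claim that the twin copies stay equal throughout the algorithm.
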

\begin{proof}
See the proof in \cite[pag. 20]{Bu}.
\end{proof}

\noindent
As a consequence, one has the following result. 

\begin{lemma}\label{width2}
Let $f$ be the image of a standard $d$-tableau $T$ by the $\BKRS$ correspondence. Then $\width(f)=\length(T)/2.$
\end{lemma}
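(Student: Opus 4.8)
The plan is to relate the width of $f=\KRS(T,T)$ to the length of $T$ directly, and then halve it using Proposition~\ref{KRS=B}. First I would recall the classical fact underpinning KRS: if $f=\KRS(T_1,T_2)$ for a standard bi-tableau $(T_1,T_2)$ of shape $\lambda$, then by \cite[Theorem~3]{K} the length of the longest increasing subsequence of the bottom row of the associated two-lined array equals $\lambda_1$, the length of the first row of the common shape. Applying this to $(T_1,T_2)=(T,T)$, we get $\width(f)=\lambda_1$, the number of columns of $T$. Since $T$ is a $d$-tableau, its shape satisfies $\lambda_1=\lambda_2,\dots,\lambda_{t-1}=\lambda_t$ with $t=\length(T)$ even, so $\lambda_1$ is exactly the number of columns; but what we want is a statement involving $\length(T)$, not the number of columns. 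So this route actually computes $\width(f)$ in terms of the wrong shape parameter, and I would instead transpose the picture.

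The cleaner approach: observe that the longest increasing subsequence of the bottom row of the array for $\KRS(T,T)$ corresponds, under the RSK-type symmetry, to the first \emph{column} of the recording data, i.e.\ to $\length(T_2)$-direction information. Concretely, I would use that $\KRS$ sends $(T,T)$ to a monomial $f$ whose exponent matrix has a combinatorial interpretation: the multiset of row-indices $v_i$ appearing, read in the order dictated by $(\bullet)$, has longest increasing subsequence of length equal to the length of the first column of $T$, namely $\length(T)$. This is the content of Greene's theorem / the basic shape-preservation property of KRS, and it is exactly how the equality $\length(T)=\width(f)$ is used in \cite{HT} for fixed-size Pfaffians. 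So I would assert $\width(f)=\length(T)$ directly, citing the relevant statement in \cite{K} or \cite{HT}.

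Now combine with Proposition~\ref{KRS=B}: we have $f=g^2$ where $g=\BKRS(T)$. Writing $g=\prod_{i=1}^m X_{v_i u_i}$ with $u_1\ge\cdots\ge u_m$, the monomial $f=g^2$ has, after reordering its $2m$ factors to satisfy $(\bullet)$, bottom row equal to two interleaved copies of the bottom row of $g$; one checks that the longest increasing subsequence of this doubled sequence is exactly twice that of $g$'s bottom row, i.e.\ $\width(f)=2\,\width(g)$. Hence $\length(T)=\width(f)=2\,\width(g)$, which gives $\width(g)=\length(T)/2$, as claimed. The only step requiring genuine care is the last combinatorial identity $\width(g^2)=2\,\width(g)$: one must verify that squaring a monomial does not create longer increasing subsequences than two disjoint copies, which follows because in $f=g^2$ ordered per $(\bullet)$ the two copies of each column index $u_i$ are adjacent and equal, so an increasing run through $f$ alternates between the copies and visits each distinct pair at most twice — I expect this to be the main (though minor) obstacle, and it is presumably already implicit in \cite[Section~2]{Bu}, so a reference there would suffice.
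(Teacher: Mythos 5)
Your overall route is the paper's: quote $\width(\KRS(T,T))=\length(T)$ from \cite[Section 5]{HT} and combine it with $\KRS(T,T)=g^2$, $g=\BKRS(T)$, from Proposition \ref{KRS=B}. The gap sits exactly at the step you yourself flag as the remaining obstacle, $\width(f)=2\,\width(g)$, and your justification of it fails. In this paper ``increasing'' in the definition of width means \emph{strictly} increasing: in Example \ref{krsmadeeasy} the bottom row of the array of $\KRS(T,T)$ is $2,2,3,4,1,5,5,3$ and the paper computes width $4$, realised by $2,3,4,5$ (not $6$). You present $f=g^2$ by repeating each factor $X_{v_iu_i}$ of $g$ twice and reordering per $(\bullet)$, so the bottom row becomes $g$'s bottom row with every entry duplicated in adjacent positions; duplicating entries does not lengthen any strictly increasing subsequence, so your argument, carried out literally, gives $\width(g)$ again rather than $2\,\width(g)$. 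Concretely, for Example \ref{ehgia} the doubled word $2,2,2,2,3,3,1,1$ still has longest strictly increasing subsequence of length $2$, whereas $\width(\KRS(T,T))=4$. (Under weak increase the doubled word would indeed double, but that is neither the definition in force nor the notion relevant to antidiagonals, whose row indices must increase strictly.)

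What actually produces the factor $2$ is that the two-lined array underlying $\KRS(T,T)$ --- the array to which the width computation and the results of \cite{HT} refer --- is not the array of $g$ with doubled columns but its \emph{symmetrisation}: for every factor $X_{vu}$ of $g$ with $v<u$ it contains both $(v,u)$ and the transposed pair $(u,v)$, as is visible in Example \ref{krsmadeeasy}, where the output is written $X_{25}X_{25}X_{34}X_{43}X_{13}X_{52}X_{52}X_{31}$. A longest increasing sequence there uses selected pairs together with their transposes, turning a chain $v_{i_1}<\dots<v_{i_s}$ (with strictly decreasing $u$'s) of the array of $g$ into a chain $v_{i_1}<\dots<v_{i_s}<u_{i_s}<\dots<u_{i_1}$ of length $2s$, and conversely one must argue that a chain in the symmetric array can be cut in half. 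This comparison between the half array produced by $\BKRS$ and the symmetric array produced by $\KRS$ (or a direct appeal to the statements of \cite{Bu} and \cite{HT} formulated for the symmetric array) is precisely what your proof would need to supply; the adjacent-duplicates argument does not address it. A minor further inaccuracy: \cite[Theorem 3]{K} is the symmetry theorem behind $f=g^2$, not a statement about longest increasing subsequences, and with the ordering $(\bullet)$ the width corresponds to the first column rather than the first row; since you ultimately cite \cite{HT} for $\width(\KRS(T,T))=\length(T)$, that slip is harmless, but the doubling step remains a genuine gap.
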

\begin{proof}
The proof follows by combining the results in \cite[Section 5]{HT} and Proposition \ref{KRS=B}.
\end{proof}
\noindent
For instance, in the last example it can be immediately seen that $\length(T)/2=2$, which is the length of the sequence $2,3$. The following remark about the BKRS procedure will be useful in the next proof.

\begin{remark}\label{1colo}
An element of the first column of a $d$-tableau $T$ is  moved when  all of the elements of the first column below it have been moved (and the bottom one deleted). In fact, an element of the first column of $T$ is moved only when replaced by its lower neighbour. To state this clearly, let $a$ be the element of $T$ in position $(i,1)$ and $b$ the element of the $(i+1)\thf$ row which is moved in position  $(i,1)$. Then $a<b$ and $b$ is smaller than any other element of the $i\thh$ row. Since $T$ is standard, $b$ must  belong to the first column.  
\end{remark}

Now that we have set our tools properly, our next task is to show that, for the class of cogenerated Pfaffian ideals described in Theorem \ref{main}, the natural generators form a G-basis.

\begin{theorem}\label{laltra}
Let $\alpha=\apfa$, with $a_i=a_{i-1}+1$ for $i=3,\ldots,2t-1$. The natural generators of $\Ial$ form a $G$-basis of $\Ial$.
\end{theorem}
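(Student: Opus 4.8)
The plan is to apply Lemma \ref{Sturm} with $I = \Ial$ and $S$ the set of natural generators of $\Ial$, using the $K$-basis $B$ of standard tableaux provided by Proposition \ref{k-bases}. Concretely, I must show: for every standard $d$-tableau $T$ encoding a standard monomial $\alpha_1\cdots\alpha_h$ with $\alpha_1 \not\geq \alpha$, there is a natural generator $\beta$ of $\Ial$ (a Pfaffian $\beta \not\geq \alpha$, of any size) such that $\ini(\beta)$ divides $\BKRS(T)$. Here I identify $\ini(\Ial) = \BKRS(\Ial)$ via Lemma \ref{Sturm}, and note that $\ini(\beta)$ is the anti-diagonal monomial $X_{b_1 b_{2s}}X_{b_2 b_{2s-1}}\cdots X_{b_s b_{s+1}}$ when $\beta = [b_1,\ldots,b_{2s}]$.

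The heart of the argument is to extract, from the first column $\alpha_1 = [a_{11}, a_{21}, \ldots, a_{\ell 1}]$ of $T$ (where $\ell = \length(T)$), enough information about $\BKRS(T)$ to build the required $\beta$. By Remark \ref{1colo}, the elements of the first column are moved in order from bottom to top, each being replaced by its lower neighbour, and when the bottom element is deleted it travels along the first row. I would track these $\length(T)/2$ deletions originating in column $1$: by Lemma \ref{width2}, $\width(\BKRS(T)) = \length(T)/2 = \ell/2$, and I expect the corresponding pairs $(u,v)$ produced by these deletions to have first coordinates among $a_{21}, a_{41}, \ldots, a_{\ell 1}$ (roughly, the even-position entries of $\alpha_1$) and second coordinates that are at most the odd-position entries $a_{11}, a_{31}, \ldots$. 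Thus $\BKRS(T)$ is divisible by an anti-diagonal-type monomial $\prod X_{c_j a_{2j,1}}$ with $c_j \leq a_{2j-1,1}$. The divisibility of monomials is coordinatewise, so to finish it suffices to exhibit a Pfaffian $\beta = [b_1,\ldots,b_\ell] \not\geq \alpha$ with $b_1 \leq c_1, b_2 = a_{21}, b_3 \leq c_2, b_4 = a_{41}, \ldots$ — and here the combinatorial hypothesis on $\alpha$ (namely $a_i = a_{i-1}+1$ for $i = 3,\ldots,2t-1$, so that $\alpha$ is an "interval" after its first entry) is exactly what lets one either take $\beta = \alpha_1$ itself (if $\ell \geq 2t$, forcing $\beta \not\geq\alpha$ by size) or, if $\ell < 2t$, carve a suitable sub-Pfaffian of $\alpha_1$ whose entries fail the inequality $\beta \geq \alpha$ at a controlled spot while its anti-diagonal still divides $\BKRS(T)$.

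The main obstacle I anticipate is the precise bookkeeping of the BKRS deletions — verifying that the deletions seeded in the first column really do contribute a monomial of anti-diagonal shape with the claimed bounds $c_j \leq a_{2j-1,1}$, and that these contributions are "disjoint" enough (as a submultiset of the monomial $\BKRS(T)$) to multiply together into a divisor. This requires a careful induction on $\length(T)$ or on the number of columns, peeling off one column-$1$ deletion at a time and checking that the remaining tableau $T''$ is still standard with $\length(T'') = \length(T) - 2$ and first column still $\not\geq \alpha$ in the appropriate truncated sense. Since $\alpha_1 \not\geq \alpha$ and $\alpha$ has the interval property, the condition " $\not\geq\alpha$" is stable under the relevant truncations, which is what makes the induction go through; the base case $\length(T) = 2$ is immediate from Lemma \ref{width2}. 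Once the divisibility $\ini(\beta) \mid \BKRS(T)$ is established for all $T \in B$, Lemma \ref{Sturm} delivers that $S$ is a G-basis, which is the assertion of Theorem \ref{laltra}.
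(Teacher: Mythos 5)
You set the proof up exactly as the paper does---by Proposition \ref{k-bases} and Lemma \ref{Sturm} it suffices to find, for every standard $d$-tableau $T$ whose first column $\beta=[b_1,\ldots,b_{2s}]$ satisfies $\beta\not\geq\alpha$, a Pfaffian in $\Ial$ whose initial term divides $\BKRS(T)$---but the combinatorial core of your argument rests on claims about $\BKRS$ that are not correct. The deletions seeded in the first column do not produce pairs whose first coordinates are the even-position entries of that column: already for a one-column tableau, $\BKRS([b_1,\ldots,b_{2s}])$ is the anti-diagonal $X_{b_1b_{2s}}X_{b_2b_{2s-1}}\cdots X_{b_sb_{s+1}}$, whose large indices are the top half $b_{s+1},\ldots,b_{2s}$, not $b_2,b_4,\ldots$. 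Moreover, the monomial you assemble from those pairs need not have anti-diagonal (nested) shape: in Example \ref{ehgia} the two column-$1$-seeded steps output $X_{25}$ and $X_{13}$, and $X_{13}X_{25}$ is the initial term of no $4$-Pfaffian, since the pairs $(1,3)$ and $(2,5)$ interleave rather than nest. Your fallback of taking the first column itself as the required Pfaffian when $\length(T)\geq 2t$ also fails: in the same example $\ini([1,3,4,5])=X_{15}X_{34}$ does not divide $\BKRS(T)=X_{25}^2X_{34}X_{13}$. So the divisibility you need is never established, and the proposed induction that peels off one column-$1$ deletion at a time has no correct invariant to carry.

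The missing idea is how the hypothesis $a_i=a_{i-1}+1$ for $i=3,\ldots,2t-1$ actually enters: it makes $\beta\not\geq\alpha$ equivalent to one of four simple alternatives, namely $b_1<a_1$, or $b_2<a_2$, or $s=t$ and $b_{2t}<a_{2t}$, or $s>t$ (a failure $b_i<a_i$ at a middle index $3\leq i\leq 2t-1$ forces $b_2<a_2$, because $a_i=a_2+i-2$ while $b_i\geq b_2+i-2$). Each alternative then requires only a small, targeted fact about $\BKRS$ rather than a global reconstruction from the first column: if $b_1<a_1$, the smallest entry is paired with some larger $c$ and $[b_1,c]\in\Ial$ divides $\BKRS(T)$; if $b_2<a_2$, one analyses the step at which $b_2$ is pushed into the first row (via Remark \ref{1colo}) and obtains a $4$-Pfaffian $[c,b_2,d,e]\in\Ial$ whose initial term divides $\BKRS(T)$; if $s>t$, Lemma \ref{width2} yields a $2s$-adiag dividing $\BKRS(T)$, and every $2s$-Pfaffian is in $\Ial$; and if $s=t$ with $b_{2t}<a_{2t}$, one compares the widths just before and just after the step that removes $b_{2t}$ to produce a $2t$-adiag whose largest index is $b_{2t}$, hence the initial term of a Pfaffian in $\Ial$. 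Your proposal has no mechanism of this kind---in particular nothing controls the largest index of the adiag you hope to extract, which is exactly the delicate point when $s=t$---and it never uses the interval hypothesis in a precise way. As it stands the argument has a genuine gap and would need to be rebuilt along these lines.
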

\begin{proof}
By Proposition  \ref{k-bases} and Lemma \ref{Sturm} it is enough to prove that, given a $d$-tableau $T$ whose first column $\beta$ is in $\Ial$, there exists a Pfaffian in $\Ial$ whose initial term divides $\BKRS(T)$.\\
Let now $\beta=[b_1,\ldots,b_{2s}]\in \Ial$. Thus, $\beta\not\geq \alpha$ and one of the following must hold true: (i) $b_1<a_1$; (ii) $b_2<a_2$; (iii) $s=t$ and $b_{2t}<a_{2t}$; (iv) $s>t$.\\ We consider each case separately.\\
(i): $b_1$ is the smallest element of $T$, therefore by  BKRS it is paired with a bigger element, we say $c$. Thus,  $\ini(\Ial)\ni \ini([b_1,c])=[b_1,c]|\BKRS(T)$, as desired.\\
(ii): Without loss of generality we may exclude the trivial case when $T$ is just the one-columned tableau  $|\beta|$. 
%ci riconduciamo al caso 3
Suppose there is an element $e$ which, when deleted, pushes the element $b_2$ into the first row in place of an element, we say  $c$, which is thus paired with $e$. By Remark \ref{1colo} we know that $e$ belonged to the
first column of $T$. As a consequence, if $d$ is paired with $b_2$, we have $c<b_2<d<e$. Therefore, since $b_2<a_2$, $[c,b_2,d,e]\in \Ial$ and its leading term divides $\BKRS(T)$.\\
(iii): The element $b_{2t}$ is the last element of the first column of $T$ and its row index is even. 
Let $T_k$ be the tableau occurring during the computation of $\BKRS(T)$ with the property that the biggest entry of $T_k$ with largest column index is $b_{2t}$. Let $T_{k+1}$ be the next tableau occurring in the procedure. Finally, let $f_k=\BKRS(T_k)$ and $f_{k+1}=\BKRS(T_{k+1})$. Evidently, $f_k=f_{k+1}X_{i_0b_{2t}}$ for some $i_0<b_{2t}$. Now, $\length(T_k)=2t$ and $\length(T_{k+1})=2t-2$, therefore by Lemma \ref{width2}, $\width(f_k)=t$ and $\width(f_{k+1})=t-1$. Thus, $f_{k+1}$ is divided by a monomial $X_{i_1j_1}\cdot\ldots\cdot X_{i_{t-1}j_{t-1}}$ ,  with  $i_1<\ldots<i_{t-1}<j_{t-1}<\ldots<j_1$ and, since the width of $f_k$ is one more than that of $f_{k+1}$,   $i_0<i_1$ and $j_1<b_{2t}$. Now, $[i_0,\ldots,i_{t-1},j_{t-1},\ldots,j_1,b_{2t}]$ is an element of $\Ial$, its initial term divides $f_k$ and $\BKRS(T)$, as desired.\\
(iv) Suppose now that $s>t$. If $f=\BKRS(T)$, then  $\width(f)=\length(T)/2=s$ by Lemma \ref{width2}. Thus, there exists a  $2s$-Pfaffian whose initial term divides $\BKRS(T)$, but all $2s$-Pfaffians are in $\Ial$. This concludes the proof of the last case and of the theorem.
\end{proof}

It is somehow surprising that the ideals satisfying the conditions of Theorem \ref{main}  are indeed the only ones endowed
with this property. We prove this fact next. In the proof,  we shall use the following standard expansion formula for Pfaffians: Given a $m\times m$ skew-symmetric submatrix $A=(a_{ij})$ of $X$, we denote by $A(i,j)$ the submatrix of $A$ obtained  by deleting the $i\thh$ and $j\thh$ row and column. Fixed an index $1\leq i \leq m$, we have
\begin{equation}\label{espanza}
\alpha(A)=\sum_{j=1}^m (-1)^{i+j+1}\sigma(i,j)a_{ij}\alpha(A(i,j))
\end{equation}
where $\sigma(i,j)$ is the sign of $j-i$.

\begin{proposition}\label{una}
Let $\alpha=\apfa\in P(X)$ and set $a_{2t+1}:=+\infty$, $i:=\min\{k\geq 2 \: a_k+1<a_{k+1}\}$. If $i<2t-1$, then the natural generators of $\Ial$ are not a G-basis for $\Ial$.
% w.r.t.  any anti-diagonal term-order.
\end{proposition}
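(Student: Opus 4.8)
\emph{The plan.} I would exhibit one explicit element $g\in\Ial$ whose initial term, for every anti-diagonal term order, is divisible by none of the $\ini(\beta)$, $\beta\in P(X)$, $\beta\not\geq\alpha$; such a $g$ cannot be reduced by the natural generators, so they do not form a $G$-basis. The element $g$ will be (part of) a Pfaffian relation obtained from \eqref{espanza}.

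\emph{Building $g$.} By minimality of $i$ the integers $a_2<a_3<\dots<a_i$ are consecutive, $a_i+1<a_{i+1}$, and (since $i<2t-1$) $i\le 2t-2$. Put
$$B=\{a_1,\dots,a_i,\ a_i+1,\ a_{i+1},\dots,a_{2t}\},$$
a set of $2t+1$ integers in $\{1,\dots,n\}$ (as $a_i+1<a_{i+1}\le a_{2t}\le n$); write its elements in increasing order as $b_1<\dots<b_{2t+1}$, so $b_\ell=a_\ell$ for $\ell\le i$, $\ b_{i+1}=a_i+1$, and $b_\ell=a_{\ell-1}$ for $\ell\ge i+2$. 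Applying \eqref{espanza} to the skew-symmetric matrix built from the rows and columns $a_1,a_1,b_2,\dots,b_{2t+1}$ of $X$ (a matrix with a repeated row, hence with vanishing Pfaffian) and expanding along the first row yields the relation
\begin{equation}\label{rel}
\sum_{k=2}^{2t+1}(-1)^{k}\,[a_1,b_k]\cdot P_k=0,\qquad P_k:=[b_1,\dots,\widehat{b_k},\dots,b_{2t+1}],
\end{equation}
each $P_k$ being a $2t$-Pfaffian.

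\emph{$g$ lies in $\Ial$.} Comparing $P_k$ entrywise with $\alpha=[a_1,\dots,a_{2t}]$: for $k\le i$ every entry of $P_k$ is $\ge$ the corresponding entry of $\alpha$, so $P_k\ge\alpha$; for $k=i+1$ one has $P_{i+1}=\alpha$; and for $k\ge i+2$ the $(i+1)$-th entry of $P_k$ is $a_i+1<a_{i+1}$, so $P_k\not\geq\alpha$, i.e.\ $P_k\in\Ial$ — this is exactly where the gap at position $i$ enters. Hence
$$g:=\sum_{k=i+2}^{2t+1}(-1)^{k}[a_1,b_k]\,P_k\ \in\ \Ial,$$
while \eqref{rel} exhibits the same $g$ as $-\sum_{k=2}^{i+1}(-1)^{k}[a_1,b_k]P_k$, a representation involving only $\alpha$ itself and Pfaffians strictly above $\alpha$. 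Each summand $P_k[a_1,b_k]$ with $k\ge i+2$ is a standard monomial (one checks $P_k\le[a_1,b_k]$), and these are pairwise distinct, so $g\neq0$. For $2t=4$, $i=2$ this $g$ is, up to sign, the element $[1,2,3,4][1,5]-[1,2,3,5][1,4]$ of Example~\ref{emas}.

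\emph{$g$ is not reducible.} Fix an anti-diagonal term order; it suffices to prove $\ini(\gamma)\nmid\mu$ for every natural generator $\gamma$ and every monomial $\mu$ of $g$, for then $\ini(g)$ is the required certificate. Every monomial of a summand $[a_1,b_k]P_k$ is $X_{a_1b_k}$ times a monomial of $P_k$, i.e.\ times a product of $t$ variables encoding a perfect matching of $B\setminus\{b_k\}$; as a multiset of indices it is therefore $B$ with $a_1$ of multiplicity two. Assume $\ini(\gamma)\mid\mu$, with $\gamma=[c_1,\dots,c_{2r}]$ a natural generator. A degree count together with the repeated index $a_1$ forces $r\le t$; the support constraint forces $\{c_1,\dots,c_{2r}\}\subseteq B$, hence $c_\ell\ge b_\ell$ for all $\ell$; feeding this into $\gamma\not\geq\alpha$ and using $b_{i+1}=a_i+1<a_{i+1}$ one deduces that $\gamma$ must begin $c_1=a_1,\dots,c_i=a_i,\ c_{i+1}=a_i+1,\ c_{i+2}=a_{i+1},\dots$, so in particular the hole index $a_i+1$ occurs among the $c$'s, in position $i+1$. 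From here the plan is to track the anti-diagonal pair of $\ini(\gamma)$ that contains $a_i+1$ (and the one containing $c_1=a_1$), compare these variables with the variables actually occurring in $\mu$, and conclude that $\mu$ can only be the leading monomial of some summand $[a_1,b_k]P_k$ with $k\le i+1$; but those summands occur in the other representation of $g$, and \eqref{rel} shows their leading monomials cancel in pairs against those of the summands with $k\ge i+2$, so $\mu$ is in fact not a monomial of $g$ — contradiction.

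\emph{Main obstacle.} The crux is this last step. One cannot argue monomial-by-monomial inside a single summand $[a_1,b_k]P_k$, because such a summand \emph{does} contain monomials divisible by the initial term of a natural generator (for instance $\ini(P_k)\cdot X_{a_1b_k}$ whenever $P_k$ is itself a natural generator); so the cancellations coming from \eqref{rel} must be used essentially. The real work is thus to pin down precisely which monomials of the individual summands are ``dangerous'' and to verify that these are exactly the ones annihilated in the sum defining $g$.
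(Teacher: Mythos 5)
Your construction is sound as far as it goes: the degenerate-Pfaffian relation obtained from \eqref{espanza} with the repeated index $a_1$ does give an element $g\in\Ial$, your check that $P_k\not\geq\alpha$ precisely for $k\geq i+2$ is correct, and for $2t=4$, $i=2$ your $g$ is (up to sign) the element of Example \ref{emas}. The genuine gap sits exactly where you flag your ``main obstacle'': to contradict the G-basis property you must exhibit the \emph{initial} monomial of some element of $\Ial$ and verify that no Pfaffian of $\Ial$ has its anti-diagonal dividing it, and you do neither. Because the leading monomials of the summands $[a_1,b_k]P_k$, $k\geq i+2$, cancel against the $k\leq i+1$ part of your relation, $\ini(g)$ is a lower-order term that you never identify; and you replace the divisibility analysis by the stronger claim that \emph{no} monomial of $g$ whatsoever is divisible by any $\ini(\gamma)$, whose proof you reduce to a cancellation bookkeeping (``which monomials of the individual summands are dangerous, and do they all cancel in $g$'') that you explicitly leave open. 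That bookkeeping is the entire difficulty: $g$ is a signed sum of $2t-i$ products of a variable with a $2t$-Pfaffian, and the step ``$\mu$ can only be the leading monomial of a summand with $k\leq i+1$'' is asserted, not argued. (A smaller inaccuracy: from $\ini(\gamma)\mid\mu$ one can force $c_1=a_1,\dots,c_i=a_i$ and $c_{i+1}=a_i+1$, but not the continuation $c_{i+2}=a_{i+1},\dots$; the entries of $\gamma$ beyond position $i+1$ are unconstrained.) So, as written, you have produced a nonzero element of $\Ial$ with two representations, but not a certificate that the natural generators fail to be a G-basis.

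The paper sidesteps precisely this issue by choosing a two-term witness: $\beta_1\gamma_1-\beta_2\gamma_2$ with $\beta_1,\beta_2$ small Pfaffians (size $i+2$ when $i$ is even, size $i+4$ when $i$ is odd, built from $a_1,\dots,a_i,a_i+1,a_{i+1},a_{i+2}$ and, in the odd case, $a_{i+3}$) and $\gamma_1,\gamma_2$ suitable $2$-Pfaffians. Expanding along the row indexed by $a_1$ shows the two top terms cancel and yields the initial monomial in closed form, e.g. $[a_1,a_{i+2}][a_1,a_i+1][a_2,a_{i+1}]\ini[a_3,\ldots,a_i]$ in the even case; writing it as a two-line array, the finitely many Pfaffians whose anti-diagonals divide it are listed and seen to be sub-Pfaffians of $[a_1,\ldots,a_{i+2}]$ or of $[a_1,a_3,\ldots,a_i,a_i+1]$, hence $\geq\alpha$ and outside $\Ial$. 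In fact, when $i$ is even, restricting your relation to the window $a_1,\dots,a_i,a_i+1,a_{i+1},a_{i+2}$ recovers the paper's element exactly (the odd case needs the slightly different combination, since the parity of the window is wrong); so to complete your argument you should either perform that truncation, or else explicitly compute $\ini(g)$ after all cancellations and run the finite divisibility check on that single monomial.
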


\begin{proof}
We prove that there exists an element which belongs to $\Ial$ and whose initial term is not divisible by the initial term of any Pfaffian in $\Ial$. 
%as we performed in the special case of Example \ref{intr}. 
In order to do so, we need to distinguish two cases.\\
Case 1: $i$ is even.\\
We start by observing that $i+2$ is even and $\leq 2t$ since $i<2t-1$.  We thus may 
%let $\ov{\alpha}:=[a_1,a_2,\ldots,a_i,a_{i+1},a_{i+2}]$ be the Pfaffian obtained by selecting the first $i+2$ entries of $\alpha$.
%We also 
let $\beta_1:=[a_1,a_2,\ldots,a_i,a_i+1,a_{i+1}]$, $\gamma_1:=[a_1,a_{i+2}]$, $\beta_2:=[a_1,a_2,\ldots,a_i,a_i+1,a_{i+2}]$, $\gamma_2:=[a_1,a_{i+1}]$ and consider
the element $\beta_1\gamma_1-\beta_2\gamma_2$.
Expanding $\beta_1$ and $\beta_2$ along  the $a_1\thf$ row by means of \eqref{espanza}, we obtain that $\beta_1$ is the alternating sum of $[a_1,a_{i+1}][a_2,\ldots,a_i,a_i+1]$, $[a_1,a_i+1][a_2,\ldots,a_i,a_{i+1}]$ and terms which do not contain either $[a_1,a_{i+1}]$ or $[a_1,a_i+1]$.  Similarly, $\beta_2$ is the alternating sum of $[a_1,a_{i+2}][a_2,\ldots,a_i,a_i+1]$,
$[a_1,a_i+1][a_2,\ldots,a_i,a_{i+2}]$ and terms which do not contain $[a_1,a_{i+2}]$. We observe that $[a_1,a_{i+2}]$ is the largest indeterminate which appears in $\beta_i, \gamma_i$, $i=1,2$, and $[a_1,a_{i+2}]>[a_1,a_{i+1}]>[a_1,a_i+1]>[a_1,a_i]>\ldots$. A quick verification on the sign of the summands shows that  a simplification occurs and it turns out that
\begin{eqnarray*}
\ini(\beta_1\gamma_1-\beta_2\gamma_2)&=&\ini([a_1,a_{i+2}][a_1,a_i+1][a_2,\ldots,a_i,a_{i+1}])\\
&=&[a_1,a_{i+2}][a_1,a_i+1][a_2,a_{i+1}]\ini[a_3,\ldots,a_i].
%\\
%&=&\ini(\ov{\alpha})[a_1,a_i+1],
\end{eqnarray*}
This is an element of $\ini(\Ial)$, since $\beta_1,\beta_2\in \Ial$, and we identify it with the array
\begin{equation}\label{krs}
\left(\begin{array}{ccccccc} a_{i+2}& a_{i+1} & a_i+1 & a_i &\ldots &a_{\frac{i+4}{2}}\\
                             a_1 & a_2 & a_1 & a_3 & \ldots & a_{\frac{i+2}{2}}
\end{array}\right).
\end{equation} 
We now search for all Pfaffians $f$ such that $\ini(f)$ divides this monomial and show that they are not in $\Ial$.
%questo \'e =BKRS(T), where $T$ denotes the $d$-tableau $|\beta_2|\gamma_2|$.
%and the computation of $\BKRS(T)$ is performed in the next remark.
Our task is reduced to merely considering all Pfaffians that one can build choosing sequences of growing indexes in the second row of \eqref{krs} and the reverse of the corresponding sequence which is determined in the first row of \eqref{krs} by this choice. It is immediate to see that no Pfaffian of size $i+4$ can be built this way. 
Moreover, the only such Pfaffian of size $i+2$ is $\ov{\alpha}:=[a_1,a_2,\ldots,a_i,a_{i+1},a_{i+2}]>\alpha$. As for those of size $i$, the only one which is not a sub-Pfaffian of $\ov{\alpha}$ is $\beta=[a_1,a_3,\ldots,a_i,a_i+1]>\alpha$. Since all the other Pfaffians   are sub-Pfaffians of $\ov{\alpha}$ or of $\beta$, and thus bigger than $\alpha$, the proof of this case is complete.\\
\noindent
Case 2: $i$ is odd.\\
Since $i<2t-1$, 
%then $i\leq 2t-3$, i.e.
$i+3\leq 2t$. Thus, we may 
%let $\ov{\alpha}:=[a_1,a_2,\ldots,a_i,a_{i+1},a_{i+2},a_{i+3}]$ be the Pfaffian obtained by selecting the first $i+3$ entries of $\alpha$. We also 
let $\beta_1:=[a_1,a_2,\ldots,a_i,a_i+1,a_{i+1},a_{i+3}]$, $\gamma_1:=[a_2,a_{i+2}]$, $\beta_2:=[a_1,a_2,\ldots,a_i,a_i+1,a_{i+2},a_{i+3}]$, $\gamma_2:=[a_2,a_{i+1}]$. Recalling that $a_i+1<a_{i+1}<a_{i+2}$, by computing as before we obtain
\begin{eqnarray*}
\ini(\beta_1\gamma_1-\beta_2\gamma_2) &=& [a_1,a_{i+3}][a_2,a_{i+2}][a_2,a_i+1]\ini[a_3,\ldots,a_i,a_{i+1}],
%&=&[a_2,a_i+1]\ini(\ov{\alpha})\in\Ial.
\end{eqnarray*}
its corresponding array being
%Again, we compute  
\begin{equation*}
%\ini(\beta_1\gamma_1-\beta_2\gamma_2)=\BKRS(T)=
\left(\begin{array}{ccccccc} a_{i+3} & a_{i+2} & a_{i+1} & a_i+1 & a_i & \ldots & a_{\frac{i-3}{2}}\\ 
                                    a_1 & a_2 & a_3 & a_2 & a_4 & \ldots & a_{\frac{i+3}{2}} \end{array}\right),
\end{equation*}
%where $T=|\beta_2|\gamma_2|$ 
and, by repeating the arguments of the previous case, we reach the conclusion in a  similar fashion. 
\end{proof}

\begin{remexas}\label{boh} 
{\bf (i)} Let us consider an interesting subclass of G-Pfaffian ideals. Let $\alpha=[1,2,\dots,2t-1,b]$. Then $\Ial$  is generated by the $2t$-Pfaffians indexed in the first $b-1$ rows and columns of $X$ and by all $(2t+2)$-Pfaffians. Note that, despite the fact that there are several expansion formulas for Pfaffians (cf. for instance \cite{S}, \cite{Ku}), some of which resembling well-known expansion formulas for minors,  it is not possible to expand all of the $2t$-Pfaffians by means of those in the first $b-1$ rows and columns only. Thus, even in this simple case, the minimal set of generators of $\Ial$ contains Pfaffians of different sizes.\\
{\bf (ii)} Let $\Ial$ and $I_{\beta}(X)$ be G-Pfaffian ideals. By Lemma  \ref{Sturm} it is easy to prove  (see also \cite{BC2}) that the generators of $\Ial$ and of $I_{\beta}(X)$ together form  a G-basis of $\Ial+I_{\beta}(X)$  w.r.t. any anti-diagonal term order (note that, in general, $\Ial+I_{\beta}(X)$ is not a cogenerated ideal).\\
{\bf (iii)} The G-Pfaffian ideals we considered in {\bf (i)} belong to the class of generalised ladder Pfaffian ideals, as introduced  in \cite{DGo}. By using  linkage Gorla, Migliore and Nagel  \cite{GoMiN} proved that the natural generators of such ideals  form a G-basis w.r.t. any anti-diagonal term order. Observe that the only generalised ladder ideals among G-Pfaffian ideals are those considered in {\bf(i)}.\\
{\bf (iv)} In \cite{RSh} a class of Pfaffian ideals, containing cogenerated ideals, is considered. By using an approach which involves Schubert varieties, the authors describe initial ideals of the ideals in this class. In Remark 1.9.1 they emphasise the fact that the generators are not a G-basis w.r.t. the orders that they consider. 
\end{remexas}

%\begin{example}\label{intr}
%Let $\alpha=[1,3,4,5,7,9]\in P(X)$, where $X=(X_{ij})$ is a $9\times 9$ skew-symmetric matrix of indeterminates. Then the natural generators of $\Ial$ are not a G-base. Consider for instance the following four Pfaffians
%$\beta_1=[1,3,4,5,6,7], \gamma_1=[1,9], \beta_2=[1,3,4,5,6,9], \gamma_2=[1,7]$ and observe that $\beta_1, \beta_2$, and therefore $\delta=\beta_1\gamma_1-\beta_2\gamma_2$ belongs to  $\Ial$. It is easy to verify that the initial term of $\delta$ is $[1,9][1,6][3,7][4,5]$ and it is divisible only by the initial terms of $\alpha=[1,3,4,5,7,9], [1,3,7,9], [1,4,5,9], [1,4,5,6], [3,4,5,7]$ but none of these is an element of $\Ial$.\end{example}
%\end{remark}

\section{Multiplicity and shellability}\label{simplicial}
We start this section by proving an easy but useful reduction.
\begin{proposition}\label{riduciedimostra}
Let $X$, $R_X$, $\alpha$, $\Ial$ be an anti-symmetric matrix of indeterminates $(X_{ij})$ of size $n$, the ring $K[X]$, a Pfaffian $[a_1,\ldots,a_{2t}]\in P(X)$ and the Pfaffian ideal of $R_X$ cogenerated by $\alpha$ respectively. 
Moreover, let $X'$, $R_{X'}$, $\beta$ and $I_\beta(X')$ an anti-symmetric matrix of indeterminates $(X'_{hk})$ of size $n-a_1+1$, the ring $K[X']$, the Pfaffian $[1, a_2-a_1+1,\ldots,a_{2t}-a_1+1]$ and the Pfaffian ideal of $R_{X'}$ cogenerated by $\beta$ respectively. Then 
$$R_X/I_{\alpha}(X)\simeq R_{X'}/I_\beta(X').$$
\end{proposition}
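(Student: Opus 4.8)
The plan is to exhibit an explicit surjective $K$-algebra homomorphism from $R_X$ onto $R_{X'}$ and to check that it induces the stated isomorphism of quotients. Concretely, set $J:=(X_{ij}\:1\le i<a_1,\ i<j\le n)\subseteq R_X$ and define $\pi\colon R_X\to R_{X'}$ by $\pi(X_{ij})=0$ when $i<a_1$ and $\pi(X_{ij})=X'_{i-a_1+1,\,j-a_1+1}$ when $a_1\le i<j\le n$. Then $\pi$ is surjective with kernel $J$, and it identifies $R_X/J$ with the polynomial subring $K[X'_{hk}\:1\le h<k\le n-a_1+1]=R_{X'}$ via the index shift $i\mapsto i-a_1+1$.

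First I would show $J\subseteq\Ial$: for $1\le i<a_1$ and $i<j\le n$ the $2$-Pfaffian $[i,j]=X_{ij}$ is a natural generator of $\Ial$, since $[i,j]\ge\alpha$ would force $a_1\le i$, which is false; hence $X_{ij}\in\Ial$. Because $J\subseteq\Ial$, a standard isomorphism theorem reduces the claim to proving the identity $\pi(\Ial)=I_\beta(X')$, for then $R_X/\Ial\cong(R_X/J)/\pi(\Ial)\cong R_{X'}/I_\beta(X')$.

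To establish $\pi(\Ial)=I_\beta(X')$ I would track the natural generators of $\Ial$ under $\pi$. Let $\gamma=[c_1,\ldots,c_{2s}]$ with $\gamma\not\ge\alpha$. If $c_1<a_1$, then expanding $\gamma$ along its first row by \eqref{espanza} shows that every term of $\gamma$ is a multiple of some $X_{c_1c_j}\in J$, so $\gamma\in J$ and $\pi(\gamma)=0$. If $c_1\ge a_1$, then $a_1\le c_1<\cdots<c_{2s}\le n$, so $\pi$ merely relabels indices and $\pi(\gamma)=\gamma':=[c_1-a_1+1,\ldots,c_{2s}-a_1+1]\in P(X')$. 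The key point is that, with $\beta=[1,a_2-a_1+1,\ldots,a_{2t}-a_1+1]$ (exactly the image of $\alpha$ under the same shift), one has $\gamma\ge\alpha$ in $P(X)$ if and only if $\gamma'\ge\beta$ in $P(X')$, since both conditions read ``$s\le t$ and $a_i\le c_i$ for $i=1,\ldots,2s$''. Hence $\gamma\mapsto\gamma'$ is a bijection between the natural generators of $\Ial$ with first index $\ge a_1$ and all the natural generators of $I_\beta(X')$. Combining the two cases, $\pi$ sends the generating set of $\Ial$ onto that of $I_\beta(X')$ together with $0$, so $\pi(\Ial)=I_\beta(X')$ and the proposition follows.

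I do not expect a genuine obstacle here; the only points requiring a little care are that a Pfaffian whose smallest index is $<a_1$ really lies in $J$ (immediate from the expansion formula \eqref{espanza}) and that the partial order on $P(X)$ is literally invariant under the uniform index shift $i\mapsto i-a_1+1$. Both are routine verifications, which is why this reduction is genuinely elementary.
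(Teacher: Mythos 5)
Your proof is correct and follows essentially the same route as the paper: mod out the indeterminates $X_{ij}$ with $i<a_1$ (all of which lie in $\Ial$) and apply the order-preserving index shift $i\mapsto i-a_1+1$, which carries the natural generators of $\Ial$ onto those of $I_\beta(X')$. The paper leaves these verifications to the reader, while you have simply written them out explicitly (the map $\pi$, its kernel, and the image of the ideal).
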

%\begin{proof}
%If $a_1=1$ there is nothing to prove. Thus, let us assume that $a_1>1$ and observe that $X_{ij}\in I_\alpha(X)$, for all $1\leq i\leq a_1-1$.
%It is sufficient to notice that $[i,*]\not\geq \alpha=[a_1,a_2,\ldots,a_{2t}]$, 
%and they are the only indeterminates in there. Let $\phi_1,\ldots,\phi_h$ be the natural generators of $\Ial$. By re-indexing, without loss of generality we may assume the the first $\overline{h}$ are the above indeterminates and
%that $\phi_{\overline{h}+1},\ldots,\phi_{h}$ are not divided by $X_{ij}$ for all $1\leq i\leq a_1-1$ and for all $j$. Thus 
%$$R_X/\Ial\simeq R_{\overline X}/(\phi_{\overline{h}+1},\ldots,\phi_h)K[\overline{X}],$$ where $\overline{X}$ is the submatrix of $X$ obtained by deleting the first $a_1-1$ rows and columns. Let $J$ be the ideal generated by $\phi_{\overline{h}+1},\ldots,\phi_h$ in $K[\overline{X}]$ and consider the isomorphism
%$T\: K[\overline{X}] \longrightarrow K[X']$ induced by the assignment $T(X_{ij})=X'_{i-a_1+1\,j-a_1+1}$. The image of $\gamma=[c_1,\ldots,c_{2r}]\in P(\overline{X})$  by $T$ is $[c'_1,\ldots,c'_{2r}]\in P(X')$ where $c'_i=c_i-a_1+1$ with $i=1,\ldots,2r$. To complete the proof, it suffices now to observe that $\phi(J)=I_\beta(X')$. Let $\phi_{i_0}=[f_1,\ldots,f_{2s}]$, for some $\overline{h}+1\leq i_0 \leq h$. Then, either $s>t$ or $a_i-a_1+1>f_i-a_1+1$. Therefore $T(\phi_{i_0})\not\geq
%\beta$ and $T(\phi_{i_0})\in I_\beta(X')$. Similarly, one proves the other inclusion and we are done. 
%\end{proof}

\begin{proof}
If $a_1=1$ there is nothing to prove. Thus, let us assume that $a_1>1$ and observe that $X_{ij}\in \Ial$, for all $1\leq i\leq a_1-1$ and these are the only
indeterminates contained therein. The reader can easily see that the desired isomorphism is yielded by modding out these indeterminates and by a change of coordinates
that preserves the poset structure. 
\end{proof}

\noindent
As a consequence, in order to study numerical invariants of $\Ial$, we may without loss of generality assume that $a_1=1$. 
\begin{remark} {\it Computational issue}. The previous proposition also makes the computation of bigger examples, which are extremely resource intensive, possible. 
\end{remark}

Next, we describe initial monomial ideals of G-Pfaffian ideals.\\
\noindent
Having in mind what initial monomials  are w.r.t. anti-diagonal term orders, we say that a monomial 
$X_{i_1j_1}\cdot\ldots\cdot X_{i_{t}j_{t}}$ is a $t$-adiag if $i_1<i_2<\ldots<i_t<j_t<j_{t-1}<\ldots<j_1$. 
Let $\alpha=[1,a,\ldots,a+2t-3,b]\in P_{2t}(X)$. All the $(2t+2)$-Pfaffians belong to $\Ial$. 
All other Pfaffians in $\Ial$ are of size  $\leq 2t$ and are of type $[c,d,*,\ldots,*]$
 with $c<d\leq a-1$ or $[e_1,\ldots,e_{2t}]$ with $e_{2t}\leq b-1$.
Now observe that, for any Pfaffian $[c,d,*,\ldots,*,e,f]$, one has  
$\ini([c,d,e,f]) | \ini([c,d,*,\ldots,*,e,f])$ and, since the generators form a G-basis, then  $\Ial$ is generated by 
$$\{[c,d] \: c<d\leq a-1\}\cup \{[c,d,e,f] \: c<d\leq a-1\}\cup \{[e_1,\ldots,e_{2t}] \: e_{2t}\leq b-1\}.$$  
In this way we have proven the following result.

\begin{proposition}\label{inidescr}
The ideal $\ini(\Ial)$ is generated by\\
$(i)$ all of the indeterminates in the first $a-1$ rows and columns (region $A$);\\
$(ii)$ all of the $2$-adiags in the first $a-1$ rows (region $A\cup B\cup C$);\\
$(iii)$ all of the $t$-adiags in the first $b-1$ rows and columns (region $A\cup B\cup D$);\\
$(iv)$ all of the $(t+1)$-adiags.\\ 

\begin{figure}[here]
\begin{center}
\includegraphics[scale=0.2]{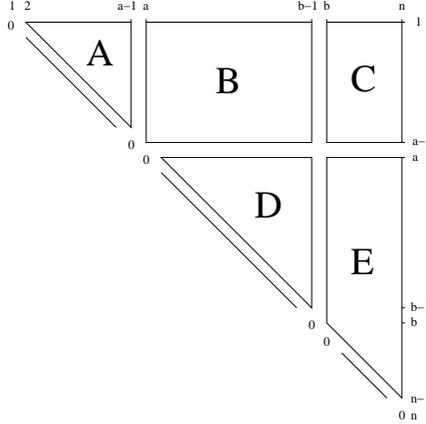}
\end{center}
\caption{$X_+$ and its regions.}
\label{regions}
\end{figure}
\end{proposition}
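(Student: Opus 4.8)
The plan is to combine the fact that the natural generators of $\Ial$ form a G-basis (Theorem \ref{laltra}, applicable because $\alpha=[1,a,a+1,\ldots,a+2t-3,b]$ has $a_i=a_{i-1}+1$ for $i=3,\ldots,2t-1$) with Lemma \ref{Sturm}, which gives $\ini(\Ial)=\BKRS(\Ial)$, so that a monomial lies in $\ini(\Ial)$ precisely when it is divisible by $\ini(\beta)$ for some Pfaffian $\beta$ in the natural generating set. By Proposition \ref{riduciedimostra} we may and do assume $a_1=1$, which is already built into the form of $\alpha$. So the whole task is to identify, among the leading terms $\ini(\beta)$ of the natural generators $\beta\in\Ial$, a sub-collection whose divisibility-closure equals that of the full set, and to show this sub-collection is exactly the list $(i)$--$(iv)$.

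First I would enumerate the natural generators. A Pfaffian $\beta=[b_1,\ldots,b_{2s}]$ lies in $\Ial$ iff $\beta\not\geq\alpha$, which (since $a_1=1$) means either $s>t$, or $s\leq t$ and $b_2\leq a-1$ (equivalently $b_2<a_2$), or $s=t$ and $b_{2t}\leq b-1$. The $s>t$ case contributes all $(2t+2)$-Pfaffians (these generate, together with the $2t$-ones below, everything of larger size, as noted in the introduction); their adiags are the $(t+1)$-adiags of item $(iv)$. For the case $b_2\leq a-1$: here $b_1<b_2\leq a-1$, so $\ini(\beta)$ contains the factor $X_{b_1b_{2s}}X_{b_2b_{2s-1}}$; this already shows $\ini(\beta)$ is divisible by a $2$-adiag $X_{cd}X_{ef}$ with $c<e\leq a-1$ (taking $c=b_1$, $e=b_2$, and $d,f$ the two largest indices of $\beta$), and if moreover $\beta$ itself has size $2$, i.e. $\beta=[c,d]$ with $d\leq a-1$, it is one of the indeterminates of item $(i)$. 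Conversely every $2$-adiag $X_{cd}X_{ef}$ in the first $a-1$ rows is $\ini([c,e,f,d])$ (note $c<e\leq a-1$ forces $e\leq a-1$, so $[c,e,\ldots]\in\Ial$), and every indeterminate $X_{cd}$ with $c<d\leq a-1$ is $\ini([c,d])$ with $[c,d]\in\Ial$. For the case $s=t$, $b_{2t}\leq b-1$: here $\ini(\beta)$ is a $t$-adiag with largest row index $b_t$ and hence with all row/column indices $\leq b_{2t}\leq b-1$, i.e. a $t$-adiag in the first $b-1$ rows and columns, item $(iii)$; conversely any $t$-adiag $X_{i_1j_1}\cdots X_{i_tj_t}$ with $i_1<\cdots<i_t<j_t<\cdots<j_1\leq b-1$ is the leading term of $[i_1,\ldots,i_t,j_t,\ldots,j_1]$, a $2t$-Pfaffian whose last index $j_1\leq b-1$, hence in $\Ial$.

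Then I would check the two containments. For ``$\subseteq$'': every natural generator's leading term is divisible by a monomial in list $(i)$--$(iv)$, by the case analysis just sketched — a Pfaffian of size $2s$ with $s>t$ has its adiag divisible by any of its sub-$(t+1)$-adiags; one with $b_2\leq a-1$ of size $>2$ has its adiag divisible by a $2$-adiag in the first $a-1$ rows (and of size $2$ it is in $(i)$); one with $s=t$ and $b_{2t}\leq b-1$ is itself a $t$-adiag in the first $b-1$ rows. For ``$\supseteq$'': each generator of list $(i)$--$(iv)$ is literally $\ini(\beta)$ for an explicit $\beta$ in the natural generating set, as exhibited above, hence lies in $\ini(\Ial)$. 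The regions $A,B,C,D$ of Figure \ref{regions} are just the bookkeeping device recording that ``first $a-1$ rows and columns'' is $A$, ``first $a-1$ rows'' is $A\cup B\cup C$, and ``first $b-1$ rows and columns'' is $A\cup B\cup D$, so the parenthetical region-labels in the statement follow by inspection.

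The step I expect to require the most care is the ``$\subseteq$'' direction for Pfaffians of intermediate size in the $b_2\leq a-1$ family — one must be sure that reducing a long Pfaffian's adiag to just its outermost pair $X_{b_1b_{2s}}X_{b_2b_{2s-1}}$ genuinely lands in region $A\cup B\cup C$ and not merely in some larger region, i.e. that $b_1<b_2\leq a-1$ is the only constraint used and it is automatically available; and symmetrically, in the $s=t$ family, that the adiag of $[b_1,\ldots,b_{2t}]$ with $b_{2t}\leq b-1$ really has all of its $2t$ indices $\leq b-1$ so that it is a $t$-adiag supported in the first $b-1$ rows \emph{and} columns. Both are immediate from $b_1<\cdots<b_{2t}$, but they are the only non-formal points, and they are exactly where the hypothesis on the shape of $\alpha$ (forcing the ``staircase'' block $a,a+1,\ldots,a+2t-3$) is silently used through the validity of the G-basis property.
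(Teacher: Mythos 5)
Your proposal is correct and takes essentially the same route as the paper: both rest on the G-basis property from Theorem \ref{laltra} to reduce $\ini(\Ial)$ to the leading terms of the natural generators, use the same enumeration of the Pfaffians $\beta\not\geq\alpha$ (via $b_2\leq a-1$, $s=t$ with $b_{2t}\leq b-1$, or $s>t$), and perform the same divisibility reduction — your outermost pair $X_{b_1b_{2s}}X_{b_2b_{2s-1}}$ is exactly the paper's observation that $\ini([c,d,e,f])$ divides $\ini([c,d,*,\ldots,*,e,f])$. No gaps to report.
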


\begin{corollary}\label{inimini}
The minimal set of generators of $\ini(\Ial)$ is given by:\\
$(i)$ all of the indeterminates in $A$;\\ 
$(ii)$ all of the $2$-adiags in $B\cup C$;\\
$(iii)$ all of the $t$-adiags in $B\cup D$ with at most one indeterminate in $B$;\\
$(iv)$ all of the $(t+1)$-adiags outside of $A$, with at most one indeterminate in $B$ and at most $t-1$ indeterminates in $B\cup D$.
\end{corollary}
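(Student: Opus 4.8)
The plan is to start from the non-minimal generating set of $\ini(\Ial)$ given in Proposition \ref{inidescr} and discard exactly those generators that are divisible by a strictly smaller one in the set. I would organize the argument type by type, according to the four families $(i)$–$(iv)$, and in each case identify the ``divisibility redundancies'' coming from the regions $A$, $B$, $C$, $D$ of $X_+$ depicted in Figure \ref{regions}. Throughout, I would use the obvious fact that a $k$-adiag $X_{i_1j_1}\cdots X_{i_kj_k}$ is divisible by an $\ell$-adiag ($\ell\leq k$) if and only if the latter's index pairs form a sub-anti-diagonal, i.e. can be obtained by selecting $\ell$ of the nested pairs $\{i_r,j_r\}$; and that it is divisible by an indeterminate $X_{pq}$ of region $A$ (so $p\leq a-1$) exactly when one of the $i_r$ equals $p$ and the corresponding $j_r$ equals $q$. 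In particular, any adiag with \emph{two} index pairs lying in the first $a-1$ rows is divisible by the single indeterminate $X_{i_1 j_1}\in A$ obtained from its outermost pair, provided that outermost pair has both coordinates $\le a-1$; one has to be a little careful here since only the rows, not the columns, of the outer pair are constrained to be small.

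Concretely: the indeterminates in $A$ are clearly minimal (no proper divisors of degree one can lie in a degree-one generator list that is an antichain of variables), giving $(i)$. For $(ii)$, a $2$-adiag $X_{i_1j_1}X_{i_2j_2}$ with $i_1<i_2<j_2<j_1$ and $i_1,i_2\le a-1$: if $i_1\le a-1$ and $j_1\le a-1$ then $X_{i_1j_1}\in A$ divides it, so such a $2$-adiag is redundant; the ones that survive are precisely those with $j_1\ge a$, i.e. whose outer pair lies in $B\cup C$ rather than in $A$ — and I would check these are never divisible by a variable of $A$ (the inner pair $\{i_2,j_2\}$ would have to be such a variable, but then the outer pair is a $2$-adiag containing it, contradicting having both of $i_1<i_2$... here one checks $i_2\le a-1$ forces $X_{i_2 j_2}\in A$ only if $j_2\le a-1$, and when $j_2\le a-1$ the generator IS redundant, so minimality is stated for those with the inner pair also poking out; this is exactly the ``$2$-adiags in $B\cup C$'' of the corollary, meaning the outer pair is in $B\cup C$). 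For $(iii)$, among $t$-adiags with all rows $\le b-1$ (region $A\cup B\cup D$), one that has two of its pairs with both coordinates $\le a-1$ is divisible by the corresponding variable of $A$, hence redundant; so a minimal such generator has at most one pair in $A$, i.e. at most one indeterminate in $B$ — and then it must have the rest in $D$ (columns $\ge a$). Moreover it is not divisible by a smaller $t'$-adiag from family $(iii)$ with $t'<t$ (those don't exist in the list — family $(iii)$ only has $t$-adiags) nor by a $2$-adiag from $(ii)$, because a $2$-sub-anti-diagonal of it lying in $B\cup C$ would again put a pair with small row but large column... I would verify this cannot happen once we've restricted to ``at most one pair in $B$''. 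Family $(iv)$ is the same analysis one degree up: a $(t+1)$-adiag with two pairs fully inside $A$ is redundant (divisible by a variable of $A$), and one with $\ge t$ pairs inside $A\cup D$ and rows $\le b-1$ is divisible by a $t$-adiag of family $(iii)$ — hence minimality requires at most one indeterminate in $B$ and at most $t-1$ in $B\cup D$, and being ``outside $A$'' means the outer pair is not a single $A$-variable.

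The two things to be careful about, which I expect to be the real work, are: (a) pinning down exactly when an adiag is divisible by a \emph{variable} of region $A$ versus merely having small row indices — since $A$ consists of variables $X_{pq}$ with $p\le a-1$ but $q$ unrestricted except $q\le n$, one must track that the divisor variable is an actual \emph{pair} of the adiag, not just a coincidence of one coordinate; and (b) checking the converse, namely that each generator passing the stated region/count restrictions is genuinely \emph{not} divisible by any other generator in the Proposition \ref{inidescr} list — this requires a short case check across all four families (can a restricted $(t+1)$-adiag be divisible by a $2$-adiag of $(ii)$? by a $t$-adiag of $(iii)$? by a variable of $(i)$?), each resolved by the nested-pair characterization of adiag divisibility together with the counting bounds. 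The main obstacle is bookkeeping the interplay of the four regions cleanly; once the divisibility criterion for adiags is stated precisely, each implication is a short combinatorial verification, and I would present it as a lemma on adiag divisibility followed by the four-case pruning argument.
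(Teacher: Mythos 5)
Your overall strategy is the right one and is in fact the paper's (the corollary is stated with no separate proof: it is meant to follow from Proposition \ref{inidescr} by discarding every generator divisible by another one and checking that nothing further can be discarded, using exactly the observation you make that an adiag divides another adiag iff its points are a subset of the other's points). The difficulty is that the content of the statement lies entirely in the region bookkeeping, and this is where your sketch goes wrong or stops short, so as written it does not establish the corollary.

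Concretely: (1) divisibility by a variable of region $A$ needs only \emph{one} point of the adiag to lie in $A$, not two; consequently the surviving generators in (ii)--(iv) contain \emph{no} point of $A$ (that is what ``in $B\cup C$'', ``in $B\cup D$'' and ``outside of $A$'' mean), not ``at most one pair in $A$''. Likewise in (ii) the correct condition is that both points (equivalently the inner one) lie outside $A$, not that the outer pair does. (2) Your gloss ``at most one pair in $A$, i.e.\ at most one indeterminate in $B$'' conflates two different regions: $A$ is the triangle with both coordinates at most $a-1$, while $B$ consists of points with row at most $a-1$ and column between $a$ and $b-1$. The ``at most one indeterminate in $B$'' constraints do not come from $A$-variables at all; they come from non-divisibility by the degree-two generators of type (ii), since any two points of an adiag lying in the first $a-1$ rows form a type (ii) $2$-adiag. (3) In (iv), ``at least $t$ pairs inside $A\cup D$'' should be $A\cup B\cup D$ (i.e.\ $t$ points with column at most $b-1$), and ``outside of $A$'' means no point in $A$: an inner pair of an anti-diagonal can lie in $A$ while the outer one does not, so your reading ``the outer pair is not an $A$-variable'' is not equivalent. (4) The verification you defer in your point (b) is precisely where the care is needed: for a $(t+1)$-adiag with no point in $A$, non-divisibility by the type (ii) generators bounds the number of points lying in the first $a-1$ rows, that is in $B\cup C$ and not merely in $B$ (two points of $C$ already give a type (ii) divisor, e.g.\ $X_{1,10}X_{2,9}$ divides the $3$-adiag $X_{1,10}X_{2,9}X_{3,8}$ when $t=2$, $a=3$, $b=7$); this is the reading item (iv) requires, and it is exactly the case check your plan leaves undone. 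Since the corollary is nothing more than these divisibility statements made precise, deferring them while the sketched parts contain the above slips leaves a genuine gap.
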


\noindent
Let $X_+:=\{(i,j) | 1\le i<j\le n\}$. In the rest of this section, with some abuse of notation, we shall identify $X_{ij}$ with $(i,j)$. Our next task is to describe the simplicial complex $\Delta_\alpha$ associated with $\Ial$, which is the family of all subsets $Z$ of $X_+$  such that the corresponding monomial $\Pi_{(i,j)\in Z}X_{ij}$  does not belong to $\ini(\Ial)$. The {\it modus operandi} is the same as in the classical cases: we decompose a set $Z$ into disjoint 
chains,  in a way similar to that of the ``light and shadow" procedure described in \cite{V} and used in \cite{HT}. Our proof is based on the use of two lights, say a ``sunlight" coming from the lower-left side and a ``moonlight" coming from the upper-right side, and a mixed use of such decompositions. Let us describe this in a more precise manner. Given a set $Z\subset X_+$, we let 
$$\begin{array}{c}\delta(Z):=\{(i,j)\in Z \: \nexists(i',j')\in Z \hbox{ with } i'>i, j'<j\},\\
\delta'(Z):=\{(i,j)\in Z \: \nexists(i',j')\in Z \hbox{ with } i'<i, j'>j\}.\end{array}$$ 
We also let  
$Z_1:=\delta(Z)$ and
$Z_1':=\delta'(Z)$. For $h>1$, we let 
$$Z_h:=\delta(Z\setminus \cup_{k<h}Z_k) \hbox{\;\;\;and\;\;\; } Z'_h:=\delta'(Z\setminus \cup_{k<h}Z'_k).$$ We thus obtain two decompositions of $Z=\cup_{h=1}^r Z_h$ and $Z=\cup_{k=1}^s Z'_k$, as disjoint chains, the first one corresponding to the sunlight shadows and the second one to the moonlight shadows. Note  that $r=s$, since the number of components in such a decomposition only depends on the maximal length of an anti-chain, i.e. an adiag, contained in $Z$; note also that if $P\in Z_i$ (resp. $Z'_j$) there is an adiag of length $i$ 
(resp. $j$) starting (resp. ending) in $P$. In the classical cases, the use of one light only is sufficient for describing the faces of the complex, but this does not work in our case. Therefore, we use a mixed decomposition: given a subset $Z$ of $X_+$ we decompose it as
$$Z=Z'_1\cup Z_1\cup Z_2\cup\ldots\cup Z_r,$$ where $Z'_1=\delta'(Z)$ and $Z_1\cup\ldots\cup Z_r$ is the sunlight decomposition of $Z\setminus Z'_1$. Furthermore,  if $Z_{t-1}\not=\emptyset$, we let $F\subset B$ be the subset of points in $X_+$ with column index larger than the smallest column index of a point in $Z_{t-1}$. We can now describe the faces of $\Delta_\alpha$.

\begin{proposition}\label{facce}
With the above notation, $Z\in \Delta_\alpha$ iff\\ 
- $Z\cap A=\emptyset$;\\
- $Z\setminus Z'_1\subseteq D\cup E$;\\
- $r\leq t-1$;\\
- if $r=t-1$ and $Z'_1\cap(B\cup D)\neq\emptyset$ then $Z'_1\cap F=\emptyset$.
\end{proposition}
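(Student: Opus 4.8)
The plan is to characterise membership $Z \in \Delta_\alpha$ directly from Proposition~\ref{inidescr}, which describes $\ini(\Ial)$ by four families of monomials, and to translate each of these four obstructions into a combinatorial condition on the mixed decomposition $Z = Z'_1 \cup Z_1 \cup \cdots \cup Z_r$. Recall $Z \in \Delta_\alpha$ means precisely that $\prod_{(i,j)\in Z} X_{ij} \notin \ini(\Ial)$, i.e. $Z$ contains no element of the minimal generating set described in Corollary~\ref{inimini}: no point of $A$, no $2$-adiag inside $B\cup C$, no $t$-adiag inside $B\cup D$ (with the stated position restrictions), and no $(t+1)$-adiag at all. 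So the forward implication amounts to showing the four bulleted conditions are each necessary, and the reverse implication to showing that if all four hold then $Z$ avoids every generator.

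First I would dispense with the easy pieces. The condition $Z\cap A = \emptyset$ is literally the statement that $Z$ contains none of the degree-one generators in $(i)$ of Corollary~\ref{inimini}. The bound $r \le t-1$ is equivalent to $Z$ containing no $(t+1)$-adiag: the number of sunlight chains $r$ equals the maximal length of an adiag (anti-chain) in $Z$ by the remark preceding the proposition, so $r \le t-1$ iff there is no adiag of length $t+1$, which is exactly generator family $(iv)$. For the second bullet, $Z\setminus Z'_1 \subseteq D\cup E$: the key observation is that $Z'_1 = \delta'(Z)$ collects the moonlight-extremal points, so any point of $Z$ in region $A\cup B\cup C$ that is \emph{not} in $Z'_1$ must have some point of $Z$ strictly to its upper-right — together these two points form a $2$-adiag, and one checks that if the lower point lies in $B\cup C$ and is non-extremal the pair lies in $B\cup C$ (here one uses that the only points of $X_+$ to the upper-right of a point of $C$ are again in $B\cup C$, by the shape of the regions in Figure~\ref{regions}), giving a generator from family $(ii)$. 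Conversely a point of $B \cup C$ can safely be in $Z$ only as a moonlight-extremal point, which forces $Z \setminus Z'_1 \subseteq D \cup E$.

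The substantive work is the fourth bullet and the matching sufficiency direction, and this is where I expect the main obstacle. One must show that the only remaining obstruction — a forbidden $t$-adiag meeting $B\cup D$ — is controlled exactly by the condition ``if $r = t-1$ and $Z'_1 \cap (B\cup D) \neq \emptyset$ then $Z'_1 \cap F = \emptyset$''. The heart of the argument is: a $t$-adiag $\sigma$ in $Z$ contained in $B\cup D$ with at most one vertex in $B$ exists iff $Z$ has an adiag of length $t-1$ inside $D\cup E$ (forcing $r = t-1$, by the previous bullets the part of $Z$ outside $Z'_1$ already lives in $D\cup E$) that can be \emph{extended} on its upper-right end by a point of $Z'_1$ lying in $B\cup D$; and such an extending point, to continue the adiag past the top of the chain $Z_{t-1}$, must have column index larger than the smallest column index occurring in $Z_{t-1}$ — precisely the defining property of the set $F$. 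So I would argue: (a) if $Z_{t-1} \neq \emptyset$, every point of $Z_{t-1}$ starts an adiag of length $t-1$ in $Z\setminus Z'_1 \subseteq D\cup E$; (b) a point $P \in Z'_1 \cap (B\cup D)$ prolongs \emph{some} such adiag to length $t$ within $B\cup D$ iff $P \in F$ (the ``at most one vertex in $B$'' and ``at most $t-1$ in $B\cup D$'' bookkeeping of Corollary~\ref{inimini}$(iii)$ is automatic since $Z'_1$ contributes just the one point $P$ and the rest sits in $D\cup E$, with $D \subseteq B\cup D$); (c) hence a forbidden $t$-adiag in $B\cup D$ exists iff $r = t-1$, $Z'_1\cap(B\cup D)\neq\emptyset$ and $Z'_1 \cap F \neq \emptyset$ — the negation of the fourth bullet. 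The delicate point, and the one deserving the most care, is (b): one must verify that the chain structure guarantees that \emph{if} an extending point with large enough column index exists at all, it genuinely closes up a length-$t$ adiag lying in $B\cup D$ (not straying into $A$ or $C$), and conversely that no $t$-adiag in $B\cup D$ can avoid using $Z'_1$ — i.e. that its top point is necessarily moonlight-extremal in $Z$. Both follow from a careful look at where adiags can sit relative to the regions $A,B,C,D,E$ of Figure~\ref{regions} and from the fact that in a length-$(t-1)$ sunlight decomposition every non-extremal point is ``covered'' on its upper-right, but assembling this cleanly is the crux of the proof.

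Finally I would collect the four equivalences: $Z \in \Delta_\alpha$ iff $Z$ avoids all four generator families iff the four bulleted conditions hold, completing both implications simultaneously.
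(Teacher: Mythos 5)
Your overall architecture is the same as the paper's: translate the four families of generators of $\ini(\Ial)$ from Proposition~\ref{inidescr} into the four bullets via the mixed decomposition, and prove both implications by matching obstructions. The easy bullets are handled essentially as in the paper, but already there you have an off-by-one slip: $r$ is the number of sunlight chains of $Z\setminus Z'_1$, not the maximal length of an adiag of $Z$, so the remark you invoke does not say what you quote. The correct link is that $Z'_1=\delta'(Z)$ is a chain and every point of $Z\setminus Z'_1$ has a point of $Z$ strictly to its upper right, whence the maximal adiag of $Z$ has length $r+1$ when $r\geq 1$; with your stated premise the equivalence you draw would read ``no $t$-adiag in $Z$'', which is not what is needed.

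The genuine gap is at the step you yourself call the crux, your claim (b): ``$P\in Z'_1\cap(B\cup D)$ prolongs some $(t-1)$-adiag to a $t$-adiag within $B\cup D$ iff $P\in F$''. This is false for the set $F$ as defined in the paper ($F$ is a subset of $B$): a capping point lying in $D$ produces a forbidden $t$-adiag entirely inside $D$ while avoiding $F$. Concretely, take $t=2$, $n=6$, $\alpha=[1,2,3,6]$ (so $a=2$, $b=6$) and $Z=\{(2,5),(3,4)\}$. Then $Z'_1=\{(2,5)\}\subseteq D$, $Z_{t-1}=Z_1=\{(3,4)\}\subseteq D$, $r=1=t-1$, $Z\cap A=\emptyset$, $Z\setminus Z'_1\subseteq D$, and $Z'_1\cap F=\emptyset$ since $F\subseteq B$; yet $X_{25}X_{34}=\ini([2,3,4,5])\in\ini(\Ial)$, so $Z\notin\Delta_\alpha$. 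Thus your equivalence (c), and with it the sufficiency half of your argument, breaks down exactly on this configuration. Note also that you silently replaced the definition of $F$ by the bare column-index condition; with that larger $F$ the necessity direction fails instead, since in any facet the points of column $n$ (for instance $(n-1,n)$, an endpoint of $Z_1$) belong to $\delta'(Z)=Z'_1$ and have column index larger than the smallest column index of $Z_{t-1}$. The paper's own proof disposes of the problematic case with the one-line claim that a $t$-adiag contained in $D$ forces $r\geq t$, and this is precisely the point that requires a real argument (or a reading of $F$ inside $B\cup D$ rather than inside $B$): a $t$-adiag in $D$ whose top point is moonlight-extremal contributes only a $(t-1)$-adiag to $Z\setminus Z'_1$. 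Your proposal neither proves this claim nor notices the tension, so the characterisation is not established as written; repairing it—deciding where the capping points may live and showing the column condition detects all of them and only them—is the substantive content of the proposition, not bookkeeping.
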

\begin{proof}
$\Leftarrow$: We need to show that none of the forbidden adiags, i.e. those described in Proposition \ref{inidescr}, are in $Z$. 
If there were a $2$-adiag of $Z$ contained in $B\cup C$, there would exist a point $P\in (B\cup C)\setminus Z'_1$. Thus
$P\in (Z\setminus Z'_1)\setminus (D\cup E)$ and the second condition would be violated. If there were a $(t+1)$-adiag of $Z$ in $X_+\setminus A$ then $r\geq t$, since there would be a $t$-adiag in $Z\setminus Z'_1$. Suppose now that there exists a $t$-adiag $d$ of points of $Z$ in $B\cup D$: $d$ cannot be contained in $D$, since this would imply $r\geq t$. Thus $Z'_1\cap(B\cup D)\neq \emptyset$ and there exists a $(t-1)$-adiag in $D$, which implies $r=t-1$. We thus can assume that $Z'_1\cap F=\emptyset$. But, if we let $P_i$ be the points of $d$ such that $P_i\in Z_i$ with $i=1,\ldots,t-1$ and $P_t\in Z'_1$, we would also have 
$P_t\in Z'_1\cap F$, which is a contradiction. Now the conclusion is straightforward by a use of Proposition \ref{inidescr}.\\
$\Rightarrow$: Let $Z=Z'_1\cup Z_1\cup Z_2\cup\ldots\cup Z_r\in \Delta_\alpha$. Clearly, $Z\cap A=\emptyset$. If there existed $P\in (Z\setminus Z'_1)\setminus (D\cup E)$, we would have $P\in (B\cup C)\setminus Z'_1$. Thus, there would exist $Q\in Z'_1\cap (B\cup C)$ so that $Q,P$ formed a $2$-adiag in $B\cup C$, and this takes care of the second condition. Suppose now $r\geq t$. Then there exists
a $t$-adiag $P_1,\ldots,P_t\in D\cup E$ with $P_t\not\in Z'_1$. Therefore, we may prolong the $t$-adiag to a $(t+1)$-adiag in 
$X_+\setminus A$, which is not possible. Now we only have to show that, if $r=t-1$ and $Z'_1\cap(B\cup D)\neq \emptyset$ then $Z'_1\cap F=\emptyset$. If this were not the case, we could form a $t$-adiag in $B\cup D$ taking one point in each 
$Z_i$, $i=1,\ldots,t-1$ and one point in $Z'_1\cap F$. Again, this is not possible and we are done.
\end{proof}

Next, we describe the facets of $\Delta_\alpha$. We denote a saturated chain  of $X_+$ with starting point $Q$ and ending point $P$
simply by $QP$ and call it, as  is common in the literature, a {\it path}. In what follows $\sqcup$ denotes a  union of non-intersecting paths.

\begin{theorem}\label{facets}
Let $Q=(1,a)$, $Q_i=(a, a+2i-1)$ for $i=1,\ldots,t-2$ and $P_j=(n-2j+1,n)$ for
$j=1,\ldots,t$. Furthermore, let $Q_{t-1}=(a,k)$, $Q^h=(h,b)$, $P_{hk}=(h,k)$, with $h,k\in \NN$.  Then
$$Z \hbox{ is a facet of } \Delta_\alpha \hbox{ \phantom{aa} iff \phantom {aa}} Z=(QP_{hk} \sqcup Q^hP_t) \sqcup_{i=1}^{t-1}Q_iP_i,$$ for some $h\in\{1,\ldots,a-1\}$ and $k\in\{a+2t-3,\ldots,b-1\}$. 
\end{theorem}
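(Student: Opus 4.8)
The plan is to prove both inclusions by combining the face characterisation of Proposition \ref{facce} with a dimension (cardinality) count. First I would fix a candidate set $Z = (QP_{hk} \sqcup Q^hP_t) \sqcup_{i=1}^{t-1}Q_iP_i$ with $h\in\{1,\ldots,a-1\}$, $k\in\{a+2t-3,\ldots,b-1\}$, and check it is a face using Proposition \ref{facce}. The four conditions there must be verified: (1) $Z\cap A=\emptyset$ holds because $QP_{hk}$ has row index $1\le h$ but all its points after $Q$ lie in column $\ge a$, hence outside $A$; the paths $Q_iP_i$ live in $D\cup E$, and $Q^hP_t$ connects $(h,b)$ to $(n-2t+1,n)$, again outside $A$. (2) For the mixed decomposition I would identify $Z'_1=\delta'(Z)$: the ``north-easternmost'' chain consists of $QP_{hk}$ together with the initial segment $Q^hP_{hk}$-like portion — more precisely the points whose column index is maximal for their row — and then argue that $Z\setminus Z'_1$ is contained in $D\cup E$. (3) The sunlight decomposition of $Z\setminus Z'_1$ has exactly $t-1$ chains because the longest adiag in it is $Q_1,\ldots,Q_{t-1}$ taken together with one point of $Q^hP_t$, giving length $\le t-1$ after removing $Z'_1$; this needs the geometry of the $Q_i$ and $P_i$ to be pinned down. (4) When $r=t-1$ and $Z'_1\cap(B\cup D)\ne\emptyset$ (which happens iff $k<b-1$ so that $P_{hk}\in B\cup D$), one checks $Z'_1\cap F=\emptyset$ by observing that $F$ consists of points with column index exceeding the smallest column index appearing in $Z_{t-1}$, and the points of $Z'_1$ in $B\cup D$ have been placed exactly so as to avoid this range.

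Next I would show such a $Z$ is maximal, i.e. a facet. The cleanest route is a cardinality argument: compute $|Z|$ for the candidate and show every face has cardinality at most this value, with equality forcing the described shape. One has $|QP_{hk}| = (h-1)+(k-a)+1$, $|Q^hP_t| = (n-2t+1-h)+(n-b)+1$, and $|Q_iP_i| = (a-a-2i+1\ \text{steps down}) + \ldots$ — in any case each $|Q_iP_i|$ is a fixed affine function of $n,a,i$ independent of $h,k$, and the sum telescopes so that $|Z|$ is actually independent of $h$ and $k$ (the $h$'s and $k$'s cancel between $QP_{hk}$ and $Q^hP_t$ and between $QP_{hk}$ and the constraint linking it to the $Q_i$-chains). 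Denote this common value $N$. Then I would prove by a direct combinatorial estimate from Proposition \ref{facce} that any face $Z'$ has $|Z'|\le N$: decompose $Z'$ by the mixed procedure, bound $|Z'_1|$ by the size of the longest NE-to-SW staircase available in $X_+\setminus A$ subject to the constraint in condition (4), and bound $|Z_1\cup\ldots\cup Z_{t-1}|$ by noting that these $t-1$ chains live in $D\cup E$ and are disjoint. Comparing with the exact count for the candidate shows maximality and that no face strictly contains a candidate, hence candidates are facets.

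For the converse — every facet has the stated form — I would start from an arbitrary facet $Z$, apply Proposition \ref{facce} to get $Z\cap A=\emptyset$, $Z\setminus Z'_1\subseteq D\cup E$, $r\le t-1$, and the condition (4). Maximality forces $r=t-1$ exactly (otherwise one could append a point raising $r$) and forces each sunlight chain $Z_i$ and the NE chain $Z'_1$ to be \emph{saturated} paths (a non-saturated chain can be enlarged by inserting an intermediate lattice point without creating a forbidden adiag, contradicting maximality — this uses that $D\cup E$ and the relevant part of $X_+$ are ``staircase-convex''). Then I would identify endpoints: the bottom-right ends of the $t-1$ sunlight chains must be the points $P_1,\ldots,P_{t-1}$ on the last antidiagonal (pushing any of them further down/right would be blocked only by the boundary, so maximality puts them against $\{j=n\}$ in the pattern $(n-2i+1,n)$), the $Z'_1$ chain must reach $P_t=(n-2t+1,n)$ at its lower end, its upper end must be $Q=(1,a)$ (column $a$ because any smaller column hits $A$, row $1$ by maximality), the sunlight chains must start at $Q_i=(a,a+2i-1)$ (row $a$ since row $<a$ is in $A\cup B$ and condition (4) together with maximality forces them to begin on row $a$ in the tightest packing), and the ``kink'' of $Z'_1$ is at some $P_{hk}=(h,k)$ with $Q^h=(h,b)$ the branch point toward $P_t$; the ranges $h\le a-1$ and $a+2t-3\le k\le b-1$ come from $Z\cap A=\emptyset$ and from condition (4) of Proposition \ref{facce} respectively.

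The main obstacle I anticipate is step (4)-bookkeeping in \emph{both} directions: precisely controlling the set $F$ and the interaction between $Z'_1$ and the last sunlight chain $Z_{t-1}$, since this is exactly the point where the two-light decomposition was introduced to repair the failure of the one-light argument. Getting the endpoints $Q_{t-1}=(a,k)$ and $Q^h=(h,b)$ to come out with the sharp ranges for $h$ and $k$ — rather than something off by one — will require carefully tracking which of the regions $A,B,C,D,E$ each lattice point of each path falls into, and this regional case analysis (rather than any deep idea) is where the real work lies.
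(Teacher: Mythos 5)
Your plan correctly identifies Proposition \ref{facce} as the engine, but as written it has two genuine gaps. First, your route to maximality of the candidate sets --- ``compute $|Z|=N$ and show every face has cardinality at most $N$, with equality forcing the shape'' --- is not what the paper does and, as sketched, does not close. Bounding $|Z'_1|$ and $|Z_1\cup\ldots\cup Z_{t-1}|$ separately by their individual maxima overshoots $N$, because those maxima cannot be attained simultaneously by \emph{disjoint} sets; obtaining the sharp bound $N$ for an arbitrary face is essentially equivalent to the facet classification you are trying to prove (in the paper, purity and the cardinality count are \emph{consequences} of Theorem \ref{facets}, cf.\ Corollary \ref{purissimo}, not inputs). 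The paper avoids all of this by a local check: for every $P\in X_+\setminus Z$ one verifies directly that $Z\cup\{P\}$ violates one of the conditions of Proposition \ref{facce}. That is the argument you should make, and it is much lighter than a global extremal count.

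Second, in the converse direction the steps you defer are precisely the content of the proof, and no idea is supplied for them. (a) That maximality forces $r=t-1$ is not a one-line ``otherwise append a point'': the paper isolates it as Lemma \ref{lefacetssonociccie}, using the $(t-1)$-adiag $R_1=(a,b-1),\ldots,R_{t-1}=(a+t-2,b-t+1)$ in $D$ and a case analysis (either some $R_{i_0}\notin Z$ and $Z\cup\{R_{i_0}\}$ is a face, or all $R_i\in Z$, forcing $R_1\in Z'_1$, hence $Z\cap C=\emptyset$, and then $(a-1,b)$ can be added). (b) The delicate point you flag --- the interaction of $Z'_1$ with $Z_{t-1}$ via $F$ --- is resolved in the paper by showing $Z'_1\cap D=\emptyset$: if $P\in Z'_1\cap D$, condition (4) gives $Z'_1\cap F=\emptyset$, so the smallest-column point of $Z_{t-1}$ lies in the shadow of some $P'\in Z'_1\cap C$, producing a $2$-adiag $P',P$ inside the \emph{single chain} $Z'_1$, a contradiction; your proposal names the obstacle but contains no mechanism to overcome it. (c) Likewise, that the two pieces of $Z'_1$ (the path into $B$ ending at $P_{hk}$ and the path from column $b$ down to $P_t$) share the same row $h$ follows in the paper from the absence of $2$-adiags in $B\cup C$ (giving $h'\geq h$) plus maximality (forcing $h'=h$), and the starting points $Q_i$ of $Z_1,\ldots,Z_{t-2}$ are pinned down by the argument of \cite[Theorem 5.4]{HT}; these identifications are asserted but not derived in your outline. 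Until (a)--(c) are argued, the proposal is a roadmap rather than a proof.
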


\begin{figure}[here]
\begin{center}
\includegraphics[scale=0.2]{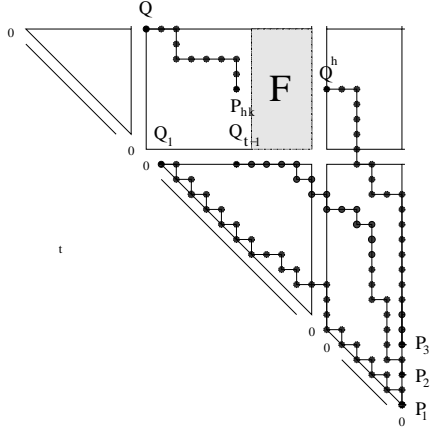}
\end{center}
\caption{A facet of $\Delta_\alpha$ with $t=3$.}
\end{figure}

\noindent
Before we proceed with the proof of the theorem, we need a couple of preparatory results. First, it might be useful to recall that, if $n=2m+i$ with $i\in\{0,1\}$, then the longest adiag contained in $X_+$ has length $m$. 
Therefore, being $b-a\geq 2t-2$, $D$ contains the $(t-1)$-adiag of the points $R_1=(a, b-1), R_2=(a+1,b-2),\ldots,R_{t-1}=(a+t-2,b-t+1)$. 

\begin{lemma}\label{lefacetssonociccie}
Let $Z=Z'_1\cup Z_1\cup\ldots \cup Z_r$ be a facet of $\Delta_\alpha$. Then, $r=t-1$.
\end{lemma}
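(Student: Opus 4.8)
The statement to prove is that a facet $Z = Z'_1\cup Z_1\cup\ldots\cup Z_r$ of $\Delta_\alpha$ necessarily has $r = t-1$. Since any face of $\Delta_\alpha$ satisfies $r\le t-1$ by Proposition \ref{facce}, the only thing to rule out is $r\le t-2$; I would do this by exhibiting, for an arbitrary face with $r\le t-2$, a strictly larger face, contradicting maximality. The natural candidate for a point to add is one of the points of the long $(t-1)$-adiag $R_1=(a,b-1),\ldots,R_{t-1}=(a+t-2,b-t+1)$ sitting in region $D$, which exists precisely because $b-a\ge 2t-2$ (the hypothesis recorded just before the lemma).

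First I would dispose of the degenerate possibility $Z_{t-1}=\emptyset$ versus $Z_{t-1}\ne\emptyset$, but actually the cleaner route is: take $Z$ a face with $r\le t-2$ and consider $Z^\ast := Z\cup\{R_1\}$ (or, if $R_1$ already lies in $Z$, some $R_j\notin Z$ — since the $R_j$ form an anti-chain entirely in $D$ while $Z$ has at most $t-2$ sunlight layers after removing $Z'_1$, not all $R_j$ can be ``absorbed'' without raising $r$, and one checks a suitable one can be appended). I would then verify the four conditions of Proposition \ref{facce} for $Z^\ast$: (a) $R_1\in D$, so $Z^\ast\cap A=\emptyset$; (b) $R_1\in D\subseteq D\cup E$, and one must check $R_1\notin Z'_1(Z^\ast)$ or, if it is, that it still lands in $D\cup E$ — in any case $Z^\ast\setminus Z'_1(Z^\ast)\subseteq D\cup E$ because adding a point of $D$ to a set already contained (outside $Z'_1$) in $D\cup E$ keeps this property, the only subtlety being how $Z'_1$ changes; (c) the new number of sunlight layers $r^\ast$ of $Z^\ast\setminus Z'_1(Z^\ast)$ satisfies $r^\ast\le r+1\le t-1$, since adding a single point increases the maximal adiag length by at most one; (d) if $r^\ast=t-1$ one must check the last condition about $Z'_1\cap F$, but here $F$ is defined via $Z_{t-1}$, and since the newly created layer structure has its points of $D$ arranged so that $R_1$ has the largest column index $b-1$ in $D$, the set $F$ and the position of $R_1$ can be controlled — this is where I would be most careful.

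The main obstacle, and the place requiring genuine care rather than bookkeeping, is condition (d) together with the bookkeeping of how $Z'_1$ (the moonlight shadow $\delta'$) and the sunlight decomposition of the remainder both change when a single point is inserted. Inserting $R_1$ can pull some points out of $Z'_1$ or push $R_1$ itself into $Z'_1$, and it can shuffle the sunlight layers $Z_1,\ldots,Z_r$; I need to argue that one can choose which $R_j$ to add (exploiting that $R_1,\ldots,R_{t-1}$ is a full-length anti-chain in $D$ with strictly decreasing column indices) so that the resulting $Z^\ast$ is still a face. A robust way to handle this: if $r\le t-2$, then $Z\setminus Z'_1$ contains no $(t-1)$-adiag, hence in particular does not contain all of $R_1,\ldots,R_{t-1}$; pick the largest-column-index $R_j$ not forced into trouble, add it, and recompute. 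Because $b-1\ge a+2t-3 > a+2(t-2)-1$, there is genuine room, and the inequality $r^\ast\le t-1$ holds with the new layer being ``fresh'' rather than colliding with region $F$ in a fatal way.

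\textbf{Summary of the argument.} Assume $r\le t-2$. Using $b-a\ge 2t-2$, the anti-chain $R_1,\ldots,R_{t-1}\subseteq D$ is not entirely contained in $Z\setminus Z'_1$ (else $r\ge t-1$); choose an appropriate $R_j\notin Z$ and set $Z^\ast=Z\cup\{R_j\}$. Check via Proposition \ref{facce} that $Z^\ast\in\Delta_\alpha$: the point lies in $D$ so $Z^\ast\cap A=\emptyset$ and $Z^\ast\setminus Z'_1\subseteq D\cup E$ still holds; adding one point raises $r$ by at most one, so the new $r^\ast\le t-1$; and if $r^\ast=t-1$ the column-index extremality of the chosen $R_j$ in $D$ keeps $Z'_1\cap F=\emptyset$ (or $Z'_1\cap(B\cup D)=\emptyset$). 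Thus $Z\subsetneq Z^\ast\in\Delta_\alpha$, contradicting that $Z$ is a facet. Hence $r=t-1$.
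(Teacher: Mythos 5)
There is a genuine gap, and it sits exactly where your argument claims there is nothing to check. You assert that if $r\le t-2$ then not all of $R_1,\ldots,R_{t-1}$ can lie in $Z$ (``not all $R_j$ can be absorbed without raising $r$''), and your summary even slides from ``the $R_j$ are not all contained in $Z\setminus Z'_1$'' to ``choose some $R_j\notin Z$''. This inference is false: $r$ counts only the sunlight layers of $Z\setminus Z'_1$, while $Z$ consists of $r+1$ chains once the moonlight chain $Z'_1$ is included. Since the $R_j$ form an anti-chain and each of $Z'_1,Z_1,\ldots,Z_r$ is a chain, a face with $r=t-2$ can perfectly well contain all $t-1$ points $R_1,\ldots,R_{t-1}$, with one of them sitting in $Z'_1$ and the remaining $t-2$ distributed among $Z_1,\ldots,Z_{t-2}$. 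In that situation there is no $R_j$ left to add, and your construction produces nothing; the contradiction argument is incomplete for precisely this configuration.

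The paper's proof is the same in spirit as yours for the other configuration (if some $R_{i_0}\notin Z$, then $Z\cup\{R_{i_0}\}$ is checked to be a face via Proposition \ref{facce}, much as you sketch), but it handles the missing case with an extra idea you would need: if all $R_i\in Z$, then the one lying in $Z'_1$ (the paper takes $R_1$) forces, by the defining property of the moonlight layer $\delta'$, that $Z\cap C=\emptyset$; one then enlarges $Z$ by the point $R=(a-1,b)\in C$ instead of a point of $D$, and verifies with Proposition \ref{facce} that $Z\cup\{R\}$ is still a face. Without this second branch (or some substitute for it), maximality is not contradicted in all cases, so your proof as written does not establish $r=t-1$. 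The remaining verifications you flag (how $Z'_1$ and the sunlight layers reshuffle after inserting a point, and the fourth condition involving $F$) are real but routine and are left at a comparable level of detail in the paper; the missing case is the substantive defect.
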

\begin{proof}
For  $i=1,\ldots,t-1$ let $R_i$ be the points  described above. By contradiction, we let $r<t-1$  and we shall prove that $Z$ is not maximal. If $R_i\in Z$ for all
$i=1,\ldots,t-1$, $R_i$ all belong to different components of $Z$ and this implies that $R_1\in Z'_1$. By definition of $Z'_1$, we have $Z\cap C=\emptyset$. Now we let $R:=(a-1,b)\in C$ and the reader is left with the task to verify by means of Proposition \ref{facce} that $Z\cup \{R\}$ is a face of $\Delta_\alpha$. Else, if there exists $R_{i_0}\not\in Z$, again by Proposition \ref{facce}, $Z\subsetneq Z\cup\{R_{i_0}\}\in \Delta_\alpha$. In both cases the maximality of $Z$ is contradicted and the lemma is proven.  
\end{proof}

\begin{proof}[Proof of Theorem \ref{facets}]
It is immediately verified by means of Proposition \ref{facce} that the union $Z$ of such chains is a face of $\Delta_\alpha$. Also, it is easy to see that, given any point in $P\in X_+\setminus Z$, $Z\cup \{P\}$ is not a face of $\Delta_\alpha$, 
therefore $Z$ is a facet. 
Conversely, let $Z$ be a facet. By Lemma \ref{lefacetssonociccie}, we may decompose $Z=Z'_1\cup Z_1\cup\ldots Z_{t-1}$ and observe that, by maximality, each $Z_i$ must end exactly in $P_i$, for $i=1,\ldots,t-1$ and $Z'_1$ in $P_t$. Moreover, $Z'_1\subseteq B\cup C\cup E$: if there were a point $P\in Z'_1\cap D$ then, by Proposition \ref{facce}, $Z'_1\cap F=\emptyset$. Evidently, a point of $Z_{t-1}$ does not belong to $Z'_1$ thus a point of $Z_{t-1}$ with smallest column index is in the shadow of a point $P'\in Z'_1\cap C$, so that we would have a $2$-adiag $P',P$ contained in the same component $Z'_1$, which is a contradiction. Now, again by maximality, $Z'_1\cap B\neq\emptyset$, therefore, $Z'_1\cap F=\emptyset$ and $Z'_1$ can be seen as a path from $Q$ to a point $P_{hk}=(h,k)$ with $h\in\{1,\ldots,a-1\}$ and $k\leq b-1$ together with a path from a point $Q^{h'}=(h',b)$ to $P_t$. Since $Z$ cannot contain $2$-adiags in $B\cup C$, $h'\geq h$ and maximality forces $h'=h$ and $Q'=P_{hk}$. Finally, we consider $Z\setminus Z'_1$ and we follow the strategy in \cite[Theorem 5.4]{HT} to obtain that $Z_1,\ldots,Z_{t-2}$ are paths starting in $Q_i$, $i=1,\ldots,t-2$. Therefore $k\geq a+2t-3$ and $Z_{t-1}$ turns out to be a path from $(a,k)$, with $k\in\{a+2t-3,\ldots,b-1\}$. Now it is sufficient to collect all the data concerning the starting and ending points of the components to reach the  conclusion of this implication and of the proof.
\end{proof} 

\noindent
As a straightforward consequence, all of the facets of $\Delta_\alpha$ have the same cardinality, i.e. are of maximal
dimension.

\begin{corollary}\label{purissimo}
Let $\alpha$ be a G-Pfaffian and $\Delta_\alpha$ its associated simplicial complex. Then, $\Delta_\alpha$ is pure.
\end{corollary}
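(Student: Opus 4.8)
The plan is to read off the result directly from Theorem \ref{facets}. That theorem gives a complete and explicit description of the facets of $\Delta_\alpha$: every facet has the form $Z=(QP_{hk} \sqcup Q^hP_t) \sqcup_{i=1}^{t-1}Q_iP_i$ for some $h\in\{1,\ldots,a-1\}$ and $k\in\{a+2t-3,\ldots,b-1\}$. Purity means precisely that all facets have the same cardinality, so it suffices to count $|Z|$ and observe that the count does not depend on the choice of $h$ and $k$.

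First I would compute the cardinality of each component path. A saturated (i.e.\ ``path'') chain in $X_+$ from a point $(p_1,q_1)$ to a point $(p_2,q_2)$ with $p_1\le p_2$ and $q_1\ge q_2$ has a number of points equal to $(p_2-p_1)+(q_1-q_2)+1$. Applying this to $QP_{hk}$ (from $Q=(1,a)$ to $P_{hk}=(h,k)$) and to $Q^hP_t$ (from $Q^h=(h,b)$ to $P_t=(n-2t+1,n)$), one gets $|QP_{hk}|+|Q^hP_t| = [(h-1)+(a-k)+1] + [(n-2t+1-h)+(b-n)+1]$. The two occurrences of $h$ cancel, and the remaining expression is $a - k + b - 2t + 2$, which is independent of $h$. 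As for $k$: the chain $Q_{t-1}P_{t-1}$ runs from $Q_{t-1}=(a,k)$ to $P_{t-1}=(n-2(t-1)+1,n)$, contributing $(n-2t+3-a)+(n-k)+1$ points, so the $-k$ coming from the first block is exactly cancelled by the $+k$-free\,... more precisely the total dependence on $k$ from $QP_{hk}$ is $-k$ and from $Q_{t-1}P_{t-1}$ is $+k$, hence $k$ drops out of $|Z|$ as well. The remaining chains $Q_iP_i$ for $i=1,\ldots,t-2$ have fixed endpoints $Q_i=(a,a+2i-1)$, $P_i=(n-2i+1,n)$ and hence fixed cardinality.

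Thus I would add up: $|Z| = |QP_{hk}| + |Q^hP_t| + \sum_{i=1}^{t-1}|Q_iP_i|$, substitute the formulas above, and verify by direct cancellation that every term involving $h$ or $k$ disappears, leaving a number depending only on $n$, $a$, $b$, $t$. Since $\Delta_\alpha$ has no facets other than those listed in Theorem \ref{facets}, and all of them have this common cardinality, every facet has the same dimension, which is by definition the statement that $\Delta_\alpha$ is pure. One should also invoke Proposition \ref{riduciedimostra} at the outset to reduce to the case $a_1=1$, so that the coordinates used in Theorem \ref{facets} are valid; and recall from the setup before Proposition \ref{inidescr} that a G-Pfaffian $\alpha$ has the form $[1,a,\ldots,a+2t-3,b]$, which is exactly the regime in which Theorem \ref{facets} is stated.

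The only mild subtlety — and it is not really an obstacle — is bookkeeping: making sure the three ``blocks'' $QP_{hk}$, $Q^hP_t$, $Q_{t-1}P_{t-1}$ are the only ones whose cardinality varies with the free parameters, and that their variations cancel in pairs ($h$ against $h$, $k$ against $k$). Since the endpoints of all the other chains $Q_1P_1,\ldots,Q_{t-2}P_{t-2}$ are pinned down by Theorem \ref{facets}, there is nothing further to check, and the corollary follows immediately. I would therefore keep the proof to a single sentence in the paper: it is a direct consequence of the explicit description of facets in Theorem \ref{facets}, since a short computation shows that the cardinality of $(QP_{hk} \sqcup Q^hP_t) \sqcup_{i=1}^{t-1}Q_iP_i$ is independent of $h$ and $k$.
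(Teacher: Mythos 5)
Your proposal is correct and follows essentially the same route as the paper's proof: reduce to $a_1=1$ via Proposition \ref{riduciedimostra}, count the points of each chain in the facet description of Theorem \ref{facets}, and observe that the dependence on $h$ and $k$ cancels, so all facets have the same cardinality. One bookkeeping slip worth fixing: these chains have both coordinates increasing, so $|QP_{hk}|=h+k-a$ (not $(h-1)+(a-k)+1$) and $|Q^hP_t|=2n-2t+2-h-b$; thus $QP_{hk}$ contributes $+k$, which cancels the $-k$ from $|Q_{t-1}P_{t-1}|=2n-2t+4-a-k$ --- the cancellation you assert, but with the signs of the two contributions interchanged.
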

\begin{proof}
By Proposition \ref{riduciedimostra} we may assume that the first entry of $\alpha$ is $1$. By the previous theorem, we can compute the cardinality of the facets of $\Delta_\alpha$.
We have $k+h-a$ points in $QP_{hk}$, $2n-2t-h-b+2$ points in $Q^hP_t$, $2n-2t+4-a-k$ points in $Q_{t-1}P_{t-1}$ and
$\sum_{i=1}^{t-2}(2n-2a-4i+3)=(t-2)(2n-2a-2t+5)$ points in the other $t-2$ chains. Altogether these numbers add up 
to $d:=2nt-1-b-2(t-1)a-(2t-3)(t-1)$, which evidently does not depend on $h$ and $k$, therefore the complex is pure of dimension $d-1$.
\end{proof}

\noindent
We observe that, since $\dim K[\Gamma]=\dim \Gamma+1$ for any simplicial complex $\Gamma$,  the dimension computed in the above proof is the one predicted in \cite[(1.1)]{D}.

\begin{corollary}\label{balla}
$\Delta_\alpha$ is  a simplicial ball and not a simplicial sphere.
\end{corollary}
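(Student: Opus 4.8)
The plan is to exhibit $\Delta_\alpha$ explicitly as a simplicial ball by producing a shelling, or at least by using the combinatorial structure of its facets obtained in Theorem~\ref{facets}, and then to argue separately that it fails to be a sphere by a boundary/link computation. Recall that a shellable pure complex is a simplicial ball precisely when it is not a simplicial sphere, and the latter can be detected by finding a face that lies in only one facet, i.e. a nonempty boundary; equivalently, one checks that some codimension-one face (``ridge'') is contained in exactly one facet. So the heart of the matter is: (a) shellability, which the paper defers to Proposition~\ref{shella}, giving Cohen--Macaulayness and the ball/sphere dichotomy; and (b) locating a free ridge to rule out the sphere.

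First I would invoke (anticipating Proposition~\ref{shella}) that $\Delta_\alpha$ is shellable; being pure (Corollary~\ref{purissimo}) and shellable, $K[\Delta_\alpha]$ is Cohen--Macaulay, and a pure shellable complex is homeomorphic either to a ball or to a sphere, the distinction being whether the reduced Euler characteristic (equivalently, the last nonzero $h$-vector entry behaviour, or directly the boundary complex) is that of a ball or a sphere. Concretely, a pure shellable $(d-1)$-complex is a sphere iff every ridge lies in exactly two facets, and a ball iff the set of ridges lying in exactly one facet is nonempty (and then forms the boundary sphere). So it suffices to produce one ridge contained in a unique facet of $\Delta_\alpha$.

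For the free ridge, I would use the facet description $Z=(QP_{hk}\sqcup Q^hP_t)\sqcup_{i=1}^{t-1}Q_iP_i$ from Theorem~\ref{facets}, where $h\in\{1,\ldots,a-1\}$ and $k\in\{a+2t-3,\ldots,b-1\}$. Fix the extreme facet, say the one with $h=1$ and $k=b-1$ (or $k=a+2t-3$), and delete a single well-chosen point near one end of one of the paths — for instance the point $(1,a)=Q$ itself, or the endpoint of $Q^hP_t$ of the form $(1,b)$ — to obtain a codimension-one face $Z_0$. I would then check, directly via Proposition~\ref{facce}, that the only way to enlarge $Z_0$ back to a facet is to re-add that same point: any alternative point $P$ would either create a forbidden $2$-adiag in $B\cup C$, or push $r$ up to $t$, or violate the last condition of Proposition~\ref{facce} relative to the region $F$. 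This is the step I expect to be the main obstacle: one must argue that no \emph{other} facet of the form prescribed in Theorem~\ref{facets} contains $Z_0$, which amounts to a short but careful case analysis on how the parameters $h,k$ and the shapes of the paths are forced by $Z_0$. Once such a free ridge is found, $\Delta_\alpha$ has nonempty boundary, hence is not a sphere, and being pure and shellable it is therefore a simplicial ball, which is the assertion of the corollary. (Alternatively, and perhaps more cleanly, one notes that the point $R=(a-1,b)\in C$ used in the proof of Lemma~\ref{lefacetssonociccie} lies in \emph{no} facet that also contains a certain $t$-adiag's worth of points, witnessing that the relevant ridge is free; I would phrase the final write-up around whichever of these two concrete witnesses makes the uniqueness check shortest.)
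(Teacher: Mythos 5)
Your overall route (shellability plus counting how many facets contain each ridge, then a Danaraj--Klee type theorem) is the same as the paper's: the paper verifies, via Theorem~\ref{facets}, that every sub-maximal face of $\Delta_\alpha$ lies in \emph{at most two} facets and that some sub-maximal face lies in exactly one, and then invokes \cite[4.7.22]{Bj et al.}. The genuine gap in your proposal is the statement of the topological criterion itself: it is \emph{not} true that a pure shellable complex is homeomorphic to a ball or a sphere, nor that ``pure shellable and not a sphere'' forces ``ball''. The ball/sphere dichotomy for shellable complexes requires the pseudomanifold condition that every ridge is contained in at most two facets; without it the conclusion fails already for three edges sharing a common vertex, which is a pure, shellable $1$-complex possessing free ridges but which is neither a ball nor a sphere. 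Consequently, exhibiting one free ridge only rules out the sphere; to get that $\Delta_\alpha$ \emph{is} a ball you must also check, as the paper does from the facet description in Theorem~\ref{facets}, that no sub-maximal face of $\Delta_\alpha$ lies in three or more facets. That verification is the substantive combinatorial step missing from your plan, and it cannot be skipped.

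On the part you flag as the main obstacle (locating a free ridge), the situation is in fact easier than you anticipate: by Theorem~\ref{facets} every facet contains the path $QP_{hk}$ and hence the point $Q=(1,a)$, so for any facet $Z$ the ridge $Z\setminus\{Q\}$ lies in $Z$ only (a facet containing it and having the same cardinality must contain $Q$, hence equal $Z$). Equivalently, $\Delta_\alpha$ is a cone with apex $Q$, hence contractible and certainly not a sphere; no case analysis on $h$, $k$ or on the shapes of the paths is needed for this half of the argument. With that observation and the missing ``at most two facets per ridge'' check supplied, your argument matches the paper's proof.
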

\begin{proof}
By means of Theorem \ref{facets}, one can easily verify that every sub-maximal face of $\Delta_\alpha$ is contained at most into two distinct facets of $\Delta_\alpha$. Moreover there is at least one sub-maximal face which is contained exactly in one facet. By \cite[4.7.22]{Bj et al.} these facts imply the assertion.
\end{proof}

Now we calculate the multiplicity of $R_{\alpha}(X)$ for a G-Pfaffian $\alpha$. For the sake of notational simplicity, we now modify slightly the notation introduced in Theorem \ref{facets} and we let $Q_t=(h,b)$. Furthermore, let  $A_{hk}=(a_{ij}^{hk})$ be the $t\times t$ matrix with entries $$a_{ij}^{hk}=\binom{x_{P_j}+y_{P_j}-x_{Q_i}-y_{Q_i}}{x_{P_j}-x_{Q_i}}-\binom{x_{P_j}+y_{P_j}-x_{Q_i}-y_{Q_i}}{x_{P_j}-y_{Q_i}},$$ where $x_p$ and $y_p$ denote the coordinates of a point $P$ in $X_+$. Observe that only the last two rows and columns of $A_{hk}$ are really dependent on $h,k$. By Proposition \ref{riduciedimostra}, it is
sufficient to provide a formula for a reduced G-Pfaffian.

\begin{proposition}\label{molte}
Let $\alpha=[1,a,\ldots,a+2t-3,b]$. Then 
$$e(R_{\alpha}(X))=\sum_{\substack{h=1,\ldots,a-1 \\ k=a+2t-3,\ldots, b-1}}\binom{h+k-a-1}{h-1}\,\,\det A_{hk}.$$
\end{proposition}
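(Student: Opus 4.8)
The plan is to compute the multiplicity $e(R_\alpha(X))$ as the number of facets of the shellable simplicial complex $\Delta_\alpha$, since for a Cohen--Macaulay ring the multiplicity equals the number of facets of maximal dimension of the associated complex (and by Corollary \ref{purissimo} and Proposition \ref{shella} all facets have dimension $d-1$). By Proposition \ref{riduciedimostra} we may assume $a_1=1$, so that $\alpha=[1,a,\ldots,a+2t-3,b]$. By Theorem \ref{facets}, each facet is uniquely of the form $Z=(QP_{hk}\sqcup Q^hP_t)\sqcup_{i=1}^{t-1}Q_iP_i$ for $h\in\{1,\ldots,a-1\}$ and $k\in\{a+2t-3,\ldots,b-1\}$, but a facet is a \emph{set} of lattice points, and for fixed $(h,k)$ there are many non-intersecting families of saturated paths realising the prescribed endpoints. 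So the count is
$$e(R_\alpha(X))=\sum_{\substack{h=1,\ldots,a-1\\ k=a+2t-3,\ldots,b-1}} N(h,k),$$
where $N(h,k)$ is the number of $(t+1)$-tuples of pairwise non-intersecting saturated paths with the given starting and ending points (the path $QP_{hk}$, the path $Q^hP_t$ with $Q^h=(h,b)$, and the paths $Q_iP_i$ for $i=1,\ldots,t-1$, where now $Q_t=(h,b)$ in the modified notation). The main work is to show $N(h,k)=\binom{h+k-a-1}{h-1}\det A_{hk}$.

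\textbf{Key steps.}
First I would observe that the path $QP_{hk}$ from $Q=(1,a)$ to $P_{hk}=(h,k)$ is forced to lie in a region ($B$, essentially the first $a-1$ columns-worth of strip) disjoint from all the other paths: the other $t$ paths all start at row $\geq a$ (the points $Q_i=(a,a+2i-1)$ and $Q_t=(h,b)$ — note $h\le a-1<a$, so actually one must check carefully which coordinate plays the role of ``row'' here, but the geometry of the light-and-shadow decomposition in Section 3 forces the $QP_{hk}$ path to be separated). Hence $N(h,k)$ factors as (number of saturated paths $(1,a)\to(h,k)$) times (number of non-intersecting families for the remaining $t$ paths). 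A saturated monotone lattice path between two comparable points $(i,j)$ and $(i',j')$ in $X_+$ is counted by a single binomial coefficient $\binom{(i'-i)+(j'-j)}{i'-i}$; here this gives $\binom{(h-1)+(k-a)}{h-1}=\binom{h+k-a-1}{h-1}$, the prefactor in the formula. Second, for the remaining $t$ paths I would apply the Lindström--Gessel--Viennot lemma: the number of families of pairwise non-intersecting saturated paths with sources $Q_1,\ldots,Q_t$ and sinks $P_1,\ldots,P_t$ (suitably ordered so that the only non-vanishing permutation is the identity) equals $\det\bigl(a_{ij}\bigr)$ where $a_{ij}$ is the number of saturated paths from $Q_i$ to $P_j$. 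Third, I would identify this path-count $a_{ij}$ with the stated matrix entry
$$a_{ij}^{hk}=\binom{x_{P_j}+y_{P_j}-x_{Q_i}-y_{Q_i}}{x_{P_j}-x_{Q_i}}-\binom{x_{P_j}+y_{P_j}-x_{Q_i}-y_{Q_i}}{x_{P_j}-y_{Q_i}}:$$
the first binomial counts all monotone lattice paths $Q_i\to P_j$ in the full grid, and the subtracted term is the reflection/ballot correction counting those that stray across the diagonal $\{i=j\}$ (so as to leave $X_+=\{i<j\}$), exactly as in the classical count of paths staying strictly above the diagonal. This is the same determinantal formula used for minors of a generic matrix in \cite{HT,BC}, adapted to the Pfaffian geometry; I would cite the relevant computation there and explain the one modification caused by the two extra ``coupled'' paths $QP_{hk}$ and $Q^hP_t$ sharing the point $(h,b)$.

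\textbf{Main obstacle.}
The delicate point is the clean separation of the $QP_{hk}$ component from the rest, and the correct setup of Lindström--Gessel--Viennot so that the sources and sinks are ordered compatibly: one must verify that for the ordering $Q_1,\ldots,Q_t$ (with $Q_t=(h,b)$ appended) and $P_1,\ldots,P_t$, every non-identity permutation $\sigma$ forces two of the paths to intersect, so that only the identity term survives in the determinant expansion — this is where the precise positions of the $Q_i$ and $P_j$, and the constraint that all paths avoid region $A$ and stay in $X_+$, must be used. A second, more bookkeeping-heavy obstacle is checking that $N(h,k)$ really equals the product (prefactor)$\times\det A_{hk}$ rather than a single determinant of a $(t+1)\times(t+1)$ matrix: this requires showing the $QP_{hk}$ path is genuinely unconstrained by the others except through the shared endpoint $(h,b)$ appearing as $Q_t$, so that summing over the internal freedom of that path is captured by the binomial prefactor while the remaining paths contribute $\det A_{hk}$ with $h,k$ entering only through the last two rows and columns (as remarked just before the statement). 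Once these geometric separations are established, the remainder is the standard reflection-principle evaluation of the matrix entries and a routine identification with \cite[(1.1)]{D} or the formulas in \cite{D2}.
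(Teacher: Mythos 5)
Your proposal is correct and follows essentially the same route as the paper: the multiplicity is the number of facets of the pure complex $\Delta_\alpha$, the count for fixed $(h,k)$ factors into the binomial $\binom{h+k-a-1}{h-1}$ counting the saturated chains $QP_{hk}$ times a Gessel--Viennot determinant for the $t$ non-intersecting chains $Q_iP_i$ (with $Q_t=(h,b)$), and the entries are evaluated by the reflection/ballot count of paths staying in $X_+$ (the paper cites Mohanty for this last step rather than rederiving it). The separation and LGV-ordering issues you flag are real but resolve exactly as you indicate, and the paper handles them only implicitly.
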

\begin{proof}
Since $\Delta_\alpha$ is pure by Corollary \ref{purissimo}, the multiplicity is just the cardinality of the set of all facets.
By Theorem \ref{facets},  these are $QP_{hk}\sqcup _{i=1}^{t}Q_iP_i$, with 
$h\in\{1,\ldots,a-1\}, k\in\{a+2t-3,\ldots,b-1\}$. Thus, the number we are seeking for is $\sum_{h,k} r(h,k)s(h,k)$ where $r(h,k)$ counts all saturated chains from $Q$ to $P_{hk}$ and $s(h,k)$ counts all disjoint unions of $t$ saturated chains $Q_iP_i$. It is well known that $r(h,k)=\binom{x_{P_{hk}}-x_Q+y_{P_{hk}}-y_Q}{x_{P_{hk}}-x_Q}$. Moreover, arguing as in \cite{GV} yields  that $s(h,k)=\det (b_{ij})$ where $b_{ij}$ is the number of saturated paths from $Q_i$ to $P_j$. Since all of these paths are contained in $X_+$, by \cite[Chap. 1, Theorem 1]{M}, $a_{ij}^{hk}=b_{ij}$ for all $i,j=1,\ldots,t$ and the proof is complete.
\end{proof}

\begin{example}
The multiplicity of the ring $R_\alpha(X)$, where $X$ is a $15\times 15$ skew-symmetric matrix and $\alpha=[4,8,9,12]$ is $50752$, as it can be easily computed using the previous proposition. For doing so, we consider the Pfaffian $[1,5,6,9]$, with $n=12$, $t=2$, $a=5$ and $b=9$, and compute the $12$ determinants of the $2\times 2$ matrices $A_{hk}$ where
$h=1,\ldots,4$ and $k=6,7,8$.
\end{example}

\noindent
We are now in a position of proving that $\Delta_\alpha$ is shellable and, thus, Cohen-Macaulay.

\begin{proposition}\label{shella}
$\Delta_\alpha$ is shellable.
\end{proposition}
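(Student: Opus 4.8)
The plan is to exhibit an explicit shelling order on the facets of $\Delta_\alpha$, exploiting the very concrete parametrisation furnished by Theorem~\ref{facets}. By Proposition~\ref{riduciedimostra} we may assume $a_1=1$, so $\alpha=[1,a,\dots,a+2t-3,b]$ and, by Theorem~\ref{facets}, each facet is determined by a pair $(h,k)$ with $h\in\{1,\dots,a-1\}$ and $k\in\{a+2t-3,\dots,b-1\}$, namely $Z_{hk}=(QP_{hk}\sqcup Q^hP_t)\sqcup\bigsqcup_{i=1}^{t-1}Q_iP_i$; here the ``inner'' chains $Q_iP_i$ for $i=1,\dots,t-2$ have fixed endpoints, while only $Z'_1$ (the path $QP_{hk}$ together with $Q^hP_t$) and the last chain $Q_{t-1}P_{t-1}$ genuinely vary. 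First I would fix the combinatorial skeleton of a facet (the decomposition into $t$ disjoint paths with the prescribed endpoints depending on $(h,k)$) and reduce the problem, exactly as in \cite[Theorem~5.4]{HT} and \cite[Theorem~5.4]{HT}-type arguments, to shelling the poset of such path-systems. The natural candidate order is the lexicographic order on the pairs $(h,k)$ — say $(h,k)\prec(h',k')$ iff $h<h'$, or $h=h'$ and $k<k'$ — refined, within a fixed $(h,k)$, by a standard shelling order on the set of disjoint path-systems with those fixed endpoints (the latter is classical, e.g. ordering each path by the sequence of its ``corner'' turns, as in the light-and-shadow shellings of \cite{HT}).

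The key step is the shelling verification: for facets $Z\prec Z'$ I must produce a facet $Z''\prec Z'$ with $Z\cap Z'\subseteq Z''\cap Z'$ and $Z''\cap Z'$ of codimension one in $Z'$ — equivalently, show that $Z'\setminus(\text{union of earlier facets})$ is generated by a unique minimal new face, i.e. the restriction map $\mathcal R(Z')$ is a single interval. I would split this into two regimes. When $Z$ and $Z'$ have the same $(h,k)$, the inner path-systems differ and this is precisely the classical situation handled in \cite[\S5]{HT}: the vertex that must be added to $Z$ to move toward $Z'$ is the first corner where the two path-systems diverge, and correcting that single corner yields the required intermediate facet. When the pairs differ, one exploits that decreasing $k$ (resp. $h$) while keeping the rest of the path ``as high/left as possible'' produces a facet earlier in the order which agrees with $Z'$ outside one vertex; concretely, if $(h,k)\neq(h',k')$ one modifies $Z'$ at the single point of $Z'$ lying in region $B$ or on the boundary between the varying path $QP_{h'k'}$ and the fixed inner chains, using Proposition~\ref{facce} to check at each stage that the modified set is still a face. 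The purity established in Corollary~\ref{purissimo} guarantees all these intermediate sets, once shown to be faces containing the required codimension-one intersection, are automatically facets.

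The main obstacle I expect is the bookkeeping at the interface of the ``variable'' part $Z'_1=QP_{hk}\cup Q^hP_t$ with the ``rigid'' inner chains $Q_1P_1,\dots,Q_{t-1}P_{t-1}$, and in particular handling the last chain $Q_{t-1}P_{t-1}=(a,k)P_{t-1}$ whose starting column $k$ is coupled to the parameter $k$ of $Z'_1$. One must be careful that the lexicographic-type order on $(h,k)$ is compatible with the inner shelling order so that no ``crossing'' of paths is forced (disjointness of the $\sqcup$ must be preserved throughout), and that the special clause of Proposition~\ref{facce} — if $r=t-1$ and $Z'_1\cap(B\cup D)\neq\emptyset$ then $Z'_1\cap F=\emptyset$ — is never violated by the intermediate facets. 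I would address this by always performing corrections from the top-left corner of $Z'$ outward, so that the portion already agreeing with $Z'$ stays a legal face and the newly introduced vertex is the unique obstruction; a short lemma isolating, for each facet $Z'$, the candidate ``entrance vertex'' (the minimal vertex of the restriction face) and verifying it lies in $B\cup C\cup D\cup E$ as dictated by Theorem~\ref{facets} should make the interval property transparent. Once the shelling is in place, Cohen-Macaulayness of $\Delta_\alpha$, and hence of $R_\alpha(X)$, follows from the standard fact that shellable complexes are Cohen-Macaulay over any field.
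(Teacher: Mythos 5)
Your overall strategy (order the facets of Theorem~\ref{facets} explicitly and verify the exchange condition by single-point modifications) has the same general shape as the paper's argument, but the order you propose is not the paper's and the decisive verification is missing. The paper does \emph{not} sort facets by the parameters $(h,k)$: it orders them by componentwise shadow-domination (each chain of $F$ contained in $\ov{\mathcal{R}}$ of the corresponding chain of $F'$), extends this arbitrarily to a total order, and in the exchange step it sometimes replaces $Q^h$ by $P_{h+1,k}$, i.e.\ it passes to an \emph{earlier} facet with a \emph{larger} $h$. So $(h,k)$ is not monotone along the natural shelling, which already casts doubt on a lex-first design. Concretely, your cross-class step is where the gap lies: a single-point swap lowering $h$ exists only when the path $QP_{hk}$ ends with a vertical step, and one lowering $k$ only when it ends with a horizontal step and $(a,k-1)$ is unoccupied; for a general facet neither move is available. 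For instance, take $h=2$ and $k=a+2t-3$ minimal, and let $Z'$ be the facet of that class with all chains pushed to the lower left (so its $Q$-path is $(1,a),(2,a),\dots,(2,k)$, ending horizontally). If your inner refinement is the \cite{HT}-style ``lower-left first'' order, then $Z'$ has \emph{no} single-swap neighbour preceding it: all within-class neighbours are later, $k$ cannot decrease, the $h$-decreasing swap is blocked by the horizontal last step, and every facet of a class $(1,\cdot)$ differs from $Z'$ in at least two points. Yet all facets with $h=1$ precede $Z'$ in your order, so the exchange condition fails for every such pair. If you flip the inner direction the example disappears, but then the compatibility of the inner order with the cross-class moves must be proved from scratch; the sentence ``produces a facet earlier in the order which agrees with $Z'$ outside one vertex'' is exactly the assertion that needs a proof, and none is given.

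Two further inaccuracies feed this gap. There is no ``single point of $Z'$ lying in region $B$'': the entire path $QP_{hk}$ lies in $B$, so your description does not identify the vertex to be exchanged. And the chains $Q_iP_i$, $i\le t-2$, are not rigid: only their endpoints are fixed, the paths themselves vary (this is what the determinant in Proposition~\ref{molte} counts), so the within-class regime you delegate to \cite{HT} cannot absorb the pairs of facets with different $(h,k)$. By contrast, the paper's domination order treats all pairs uniformly: given $F\succ F'$ one takes the least component index where domination fails, flips an upper-right corner $x$ of that chain of $F$ to $x^L$ (checking via Proposition~\ref{facce} and the facet description that the result is a facet), and handles the one exceptional corner $Q^h$ by the swap with $P_{h+1,k}$. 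As written, your proposal has a genuine gap at the cross-class exchange step, and under the most natural reading of your order that step actually fails.
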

\begin{proof}
Given $x=(u,v)\in X_+$, we set $\mathcal{R}_x:=\{(i,j)\in X_+\: i<u, j>v\}$, $\ov{\mathcal{R}}_x:=\{(i,j)\in X_+\: i\leq u, j\geq v\}$, $\mathcal{L}_x:=\{(i,j)\in X_+\: i>u, j<v\}$, and $\ov{\mathcal{L}}_x:=\{(i,j)\in X_+\: i\leq u, j\geq v\}$. Furthermore, if $S\subseteq  X_+$, we let $\mathcal{R}_S:=\cup_{x\in S}\mathcal{R}_x$, $\ov{\mathcal{R}}_S:=\cup_{x\in S}\ov{\mathcal{R}}_x$, $\mathcal{L}_S:=\cup_{x\in S}\mathcal{L}_x$ and $\ov{\mathcal{L}}_S:=\cup_{x\in S}\mathcal{L}_x$. Finally, we let $x^L:=(u+1,v-1)$. Our first task is to find a partial order on the set of the facets of $\Delta_\alpha$. Let $F=Z_1\cup\ldots\cup Z_{t-1}\cup Z_t$ and $F'=Z'_1\cup\ldots\cup Z'_{t-1}\cup Z'_t$ be decompositions of two facets $F$ and $F'$ of $\Delta_\alpha$, where we renamed the first 
components to $Z_t$ and $Z'_t$ for reason of notation. Now we may let
$$F \succeq F'  \hbox{\;\;\; if and only if \;\;\;} Z_i \subseteq \ov{\mathcal{R}}_{Z'_i} \hbox{\;\;\; for all \;\;\;} i+1,\ldots,t.$$
Next, we extend $\preceq$ to a total order on the set of facets of $\Delta_\alpha$. Thus, for the purpose of
 proving that $\Delta_\alpha$ is shellable, we need to show that, given any two facets $F\succ F'$, there exists
$x\in F\setminus F'$ and a third facet $F''$ such that $F \succ F''$ and $F\setminus F''=\{x\}$. 
Let $F \succ F'$ be two given facets decomposed as above. 
Since $F\not\preceq F'$,  we may consider the least 
integer $i$ such that $Z'_i\not\subseteq\ov{\mathcal{R}}_{Z_i}$, with $i\leq t-1$ or $i=t$.
 \begin{figure*}[here]
\begin{center}
\includegraphics[scale=0.2]{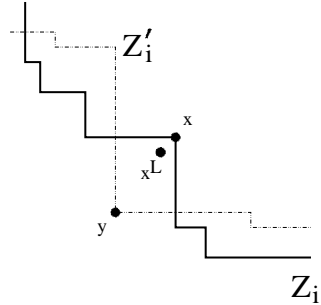}
\end{center}
\caption{Constructing a shelling on $\Delta_\alpha$.}
\label{shellaz}
\end{figure*}
\noindent In case $i\leq t-1$, if 
$y\in Z'_i\setminus \ov{\mathcal{R}}_{Z_i}$ then $y\in \mathcal{L}_{Z_i}$. Now we pick an element $x$ such that $y\in \mathcal{L}_x$ and $\ov{\mathcal{R}}_x\cap Z_i=\{x\}$  by taking an upper right corner of the chain $Z_i$ which is in $\mathcal{R}_y$, cf. Figure \ref{shellaz}. Finally let $F'':=(F\cup\{x^L\})\setminus\{x\}$. 
It is easy to verify that $x^L\in X_+\setminus F$ and that $F''$ is a facet of $\Delta_\alpha$ with the requested properties. In fact, $x^L\in F$ would imply $x^L\in Z_{i-1}\cap\ov{\mathcal{R}}_y$. Since $y\in Z'_i\subseteq \mathcal{R}_{Z'_{i-1}}$, $y$ is also in $\ov{\mathcal{R}}_{Z_{i-1}}$ because, by minimality of $i$, $Z'_{i-1}\subseteq \ov{\mathcal{R}}_{Z_{i-1}}$. Finally, this implies $x^L\in\mathcal{R}_{Z_{i-1}}$ and, consequently,
$x^L\not\in Z_{i-1}$, which is a contradiction.\\ In the other case, we let $Q^h=(h,b)\in Z_t$ and  start by arguing as before to find $y$. In constructing $x$, if we can choose a point of $F$ other than $Q^h$, we do it and the proof runs as in the previous case. Otherwise, we set $F'':=(F\cup \{P_{h+1k}\})\setminus \{Q^h\}$ 
and are left with the task to prove that $F''$ is a facet, which is equivalent to say that $P_{h+1k}\not\in F$. Since $P_{hk}$ is in $Z_t$, if $P_{h+1k}$
belonged to $F$, it would be a point of $Z_{t-1}$ and $h+1=a$. Therefore $y\in Z'_t\cap \mathcal{L}_{Q^h} \subseteq (B\cup C\cup E)\cap \mathcal{L}_{Q^h}=\emptyset$, which is the desired contradiction.
\end{proof}

In light of the results we obtained, it is now very natural to look for a new term order, w.r.t.  which the generators of any cogenerated ideal form a G-basis. An interesting non anti-diagonal term order is found in \cite{JW}, for which the result is proven for Pfaffians of fixed size and a complete description of the associated simplicial complexes is  given. It does not extend however to our setting (cf. again Example \ref{emas}).

{\small 
}
\end{document}